\newtheorem{theorem}{Theorem}
\newtheorem{lemma}[theorem]{Lemma}
\newtheorem{corollary}[theorem]{Corollary}
\newtheorem{example}{Example}
\newtheorem{definition}{Definition}
\newenvironment{proof}[1][Proof]{\begin{trivlist}
\item[\hskip \labelsep {\bfseries #1}]}{\end{trivlist}}
\newenvironment{remark}[1][Remark]{\begin{trivlist}
\item[\hskip \labelsep {\bfseries #1}]}{\end{trivlist}}
\newcommand{\qed}{$\square$}
\newcommand\lmax{l_\text{max}}
\newcommand\cost{\text{COST}}
\begin{document}
\author{
Hadi Pouransari\footnote{email: hadip@stanford.edu} {\scriptsize AND} Eric Darve
}
\title{OPTIMIZING THE ADAPTIVE FAST MULTIPOLE METHOD FOR FRACTAL SETS}
\date{}
\maketitle


\section* {Abstract}
We have performed a detailed analysis of the fast multipole method (FMM) in the adaptive case, in which the depth of the FMM tree is non-uniform. Previous works in this area have focused mostly on special types of adaptive distributions, for example when points accumulate on a 2D manifold or accumulate around a few points in space. Instead, we considered a more general situation in which fractal sets, e.g., Cantor sets and generalizations, are used to create adaptive sets of points. Such sets are characterized by their dimension, a number between 0 and 3. We introduced a mathematical framework to define a converging sequence of octrees, and based on that, demonstrated how to increase $N \to \infty$.

A new complexity analysis for the adaptive FMM is introduced. It is shown that the ${\cal{O}}(N)$ complexity is achievable for any distribution of particles, when a modified adaptive FMM is exploited. We analyzed how the FMM performs for fractal point distributions, and how optimal parameters can be picked, e.g., the criterion used to stop the subdivision of an FMM cell. A new subdividing double-threshold method is introduced, and better performance demonstrated. Parameters in the FMM are modeled as a function of particle distribution dimension, and the optimal values are obtained. A three dimensional kernel independent black box adaptive FMM is implemented and used for all calculations. 

\section* {Key words}
Adaptive fast multipole method, fractal set, octree, linear complexity


\section {Introduction}

The N-body problem, notwithstanding its well established analytical difficulty \cite{strogatz94}, has been studied extensively by means of numerical simulation. This problem has broad application in a number of fields such as celestial mechanics, molecular dynamics, fluid dynamics (e.g., vortex methods), solid mechanics (elasticity, cracks, dislocation dynamics, etc.), and plasma physics. More broadly, the problem of computing $N^2$ interactions among $N$ points or $N$ variables appears in the boundary element methods, problems involving radial basis functions (e.g., interpolation, meshless algorithms, etc.), or in probability theory to describe dense covariance matrices (e.g., seismic imaging, linear stochastic inversion, kriging, Kalman filters, etc.). 

The essence of all of these simulations is to compute particle-particle interactions. Considering $N$ particles located at positions $\{ x_i \}$, the net contribution of these particles at some observation point $y$ is calculated by a sum of the form:
\begin{equation}
\label{FMM_SUM}
f(y)=\sum_{i=1}^{N} K(x_i,y) \sigma_i
\end{equation}
where $K$ is some $\mathbb{R}^3 \times \mathbb{R}^3 \rightarrow \mathbb{R}^3$ function called the kernel, and $\sigma_i$ is the intensity of the $i$'th particle field (particle mass in the celestial mechanic example).  The above formulation is essentially a matrix-vector multiplication, $A \boldsymbol{\sigma}$, where $[A]_{ij}=K(x_i,y_j)$. In general, the cost of this calculation is ${\cal O}(N^2)$. However, the fast multipole formulation introduced by Greengard and Rokhlin \cite{greengard87} allows a matrix-vector multiplication to be approximated with desired accuracy in ${\cal{O}}(N)$ time. This followed by several works to extend the algorithm for different kernels, analyze approximation, parallel implementation techniques, etc. We refer to some of the significant publications and our previous works \cite{greengard87,greengard97,fong09,darve2000a,darve2000b,darve11,yokota10,aparna10}.

An FMM matrix is a special class of ${\cal H}^2$-matrices with analytical low-rank off-diagonal blocks. ${\cal H}^2$-matrix itself is a subclass of a larger category of hierarchical matrices called ${\cal{H}}$. There are many fast linear algebra techniques for different classes of ${\cal{H}}$-matrices \cite{sivaram13}. As is explained in \S \ref{AFMM}, the main idea of fast linear algebra techniques for the hierarchical matrices is the low-rank approximation of interaction between well-separated clusters. In the context of celestial mechanics this idea translates to aggregating the effect of very far planets and computing their net gravitational force through a more efficient representation.

The adaptive FMM refers to the case where the particle distribution, and its corresponding hierarchical tree, is not uniform. The extension of the uniform FMM is well explained in \cite{carrier88, greengard99, zorin03}, followed by various aspects of its parallel implementation on different machines as discussed in \cite{zorin12, goude13}, and many other papers. 

Although the algorithm has been described several times in the past, previous papers have limited their analysis to very specific point distributions. We will detail this point shortly. The key point essentially is the manner in which points are distributed, in a non-uniform adaptive setting, as $N$ goes to infinity. In the uniform case, the issue of increasing $N$ presents no particular difficulty. We can simply increase the {\bf density} of points uniformly and study how accuracy and parameters in the FMM are adjusted as a function of $N$.

The non-uniform case however is more difficult. One essential point is describing the process of adding points so that $N \to \infty$. The adaptive test cases considered by most previous works fall broadly into the following categories:
\begin{enumerate}
\item A small number of subregions are picked (e.g., $n$ spheres) and points are progressively added to each subregion by distributing them with some smooth distribution (e.g., uniform, Gaussian, etc.) inside each region. Then the diameter and distance between regions are varied \cite{goude13,abeyratne13,zorin03}.
\item Manifolds are considered, that is surfaces or lines. Then, points are added on these manifolds again using a randomly uniform distribution \cite{bosch99, carrier88, greengard99,zorin03}.
\item Points are chosen such that they accumulate at some location, for example point $i$ is chosen as $x_i = 1/i^2$ \cite{carrier88, greengard99, huang09, singh93, singh95}.
\end{enumerate}
We note that complex cases have been considered such as in~\cite{greengard99, song97} but in those particular cases $N$ was fixed. 

All these cases represent only a small set of possible situations. There are many more ways to create non-uniform distribution of points. In this paper, we focus on the third case, in which points accumulate. However, we extend this situation to points that essentially accumulate at an infinite number of locations. This naturally leads to {\bf fractal sets.}

Fractal sets are encountered in several problems, notoriously when creating models of the universe~\cite{mandelbrot83,pietronero1987fractal,borgani1993multifractal,martinez1990universe,ribeiro1998fractals,james80,joyce2000}. Recently, antennas with fractal geometries~\cite{gianvittorio2002fractal,cohen1997fractal,vinoy2001hilbert,best2002resonant,cohen2000microstrip} have been studied. Their fractal shape allows a more compact design (taking advantage of the space-filling properties of fractal curves). Such contours are able to add more electrical length in less volume. In~\cite{parron2003method}, the method of moments and a variant of the fast multipole method~\cite{michielssen1996multilevel} is used to study iterated Sierpinski microstrip patch antennas (Sierpinski pre-fractal sets). Although the literature on applying the fast multipole method to fractal sets is limited, we mention~\cite{lang2005empirical}, which analyzed fast Gauss transforms and related fast methods for non-uniform data sets, described using their lacunarity~\cite{allain1991characterizing} (a concept related to the Hausdorff dimension for fractal sets).

Fractal sets, in addition to their physical importance, are interesting from the numerical point of view. Some previous works have reported a strong correlation between the computational cost of the FMM, and the dimension of points, based on observations for limited cases with an integer dimension (1, 2, or 3) \cite{greengard89, bosch99, gumerov03}. Fractal sets can have any dimension between 0 and 3 (not only integers), while filling a three-dimensional box. This more general definition of dimension is known as the fractal  dimension, box-counting dimension, or Hausdorff dimension~\cite{hutchinson81, dierk07}. Specifically in our numerical benchmarks, one of our main examples is a triple tensor product of the generalized Cantor sets, which provides all range of box-counting dimensions varying continuously from 0 to 3. Fractal sets, generally results in an adaptive FMM tree with leaf nodes sparsely distributed at different depths of the tree. Therefore, fractal sets with a continuously tunable dimension, are an interesting benchmark for the study of performance of the adaptive FMM codes. Furthermore, the performance of an adaptive FMM algorithm applied to a regular low dimensional benchmarks  (e.g., a 2D manifold in a 3D box) highly depends on the orientation of the set of points. In contrast, many fractal sets maintain their adaptive properties with rotation (i.e., they are more isotropic). This is the case for example when using tensor products of Cantor sets.

In \S \ref{AFMM}, we briefly introduce the adaptive FMM algorithm, and set of notations and definitions that are used in the rest of the paper. It is known that the adaptive FMM algorithm maintains the ${\cal{O}}(N)$ complexity irrespective of the point distribution \cite{nabors94, aluru1996greengard}. This requires a modification to the original FMM. In \S \ref{proof}, a new proof for the linear complexity of the adaptive FMM is introduced. We analyze the complexity of every interaction in the adaptive FMM step by step. This makes it apparent what modifications to the original FMM are required to ensure ${\cal{O}}(N)$ complexity for a general particle distribution. Moreover, the proof provides a heuristic for the reader about the effect of point distribution on the complexity of each part of the adaptive FMM.

Using a C++ adaptive FMM code (which can be downloaded from \url{stanford.edu/~hadip/AFMM.tar}) based on the black-box algorithm of \cite{fong09}, the aforementioned fractal point distributions are studied, along with a detailed counting of the number of floating point operations. In \S \ref{numerical}, the calculation begins with some standard cases (e.g., uniform, spiral, etc.), and evidence of how the point distribution changes the FMM cost and the optimal parameters is presented. The calculation is extended to general fractal sets afterwards.

When studying the adaptive FMM, the process of increasing $N$ should be considered rigorously. If we simply consider a sequence of points $x_i$, and define the set $S_N = \{x_i\}_{i=1,\ldots,N}$, the sequence of sets $S_N$ can only converge to a set with a countable number of points (by construction). However, we are interested to study the performance of the adaptive FMM for a sequence of sets that is converging to a fractal set (e.g., Cantor set). In \S \ref{fractal}, we introduce the concept of super octree, and how to define a sequence of octrees converging to a super octree. Each super octree is associated with a set of points (e.g., a fractal set). This provides the required mathematical framework to study a sequence of finite adaptive FMM problems (with $N \to \infty$), where point distributions have properties converging to the properties of some target infinite (and uncountable) set of points such as the Cantor set. In our numerical benchmarks we consider generalized Cantor sets that are constructed based on a recursive definition (find the code to generate points from \url{stanford.edu/~hadip/CANTOR.tar}). The points $x_i$ are generated by going through $k$ iterations of this recursive process. As $k \to \infty$, $N$ goes to infinity in a well-defined manner.

In the first part of \S \ref{optimization}, we propose a new strategy to build the adaptive tree. We focus on the criterion used to determine whether a cell needs to be further subdivided or not. The original bisection algorithm uses one threshold determined \emph{a priori} (e.g., \cite{greengard87, carrier88, zorin03}). Previous papers have also considered the median point, although that was mostly in the context of parallel implementations, which is also based on an \emph{a priori} threshold \cite{goude13}. We are proposing to use two independent parameters simultaneously: the maximum number of points per leaf cluster, and the maximum number of levels (depth) $\lmax$ of the tree. Therefore, we need to solve an optimization problem to find the optimal parameters.

Using two independent parameters turned out to be very important in order to reduce the computational cost. The main observation is that a leaf at level $\lmax$ generates mostly M2L operations, while leaves at levels lower than $\lmax$ also generate M2P and P2L operations. As we reduce the leaf particle threshold, we are replacing leaves at $\lmax$ by leaves at lower levels, which, in general, leads to fewer M2L and more M2P and P2L. These operators are not equivalent. M2L is pre-computed and can be accelerated using various techniques such as the fast Fourier transforms. In contrast, M2P and P2L are typically not precomputed (because this leads to a large memory storage) and are evaluated on the fly, as needed. They are consequently much more expensive. As our benchmark tests show, replacing M2L by M2P and P2L is disadvantageous. As a result, the particle threshold should be chosen very small so as to minimize the appearance of M2P and P2L. In particular, one outcome of this optimization is that the number of particles for clusters at $\lmax$ should be much larger than for leaf clusters at lower levels. This is in contrast with most adaptive FMM codes, that use a uniform threshold for all leaf nodes. These codes pay a high cost by generating too many expensive M2P and P2L operations.

In our numerical benchmarks, we introduce a near-optimal solution to this optimization problem, and demonstrate a better performance comparing to the conventional scheme. In the case where a very cheap M2L is possible (e.g., using fast Fourier transforms), our near-optimal solution brings about an even more dramatic computational cost reduction. One of our conclusions is that using the X-list and W-list (defined in \S \ref{AFMM}) in the adaptive FMM is not necessarily advantageous.

In section \ref{heuristic}, we study how parameters in the FMM such as the optimum total number of levels and the maximum number of points per leaf cells can be optimized as a function of the dimension of the set. We consider dimensions ranging continuously from 1 to 3, and show a strong dependence of these parameters on the dimension. Other details of the distribution appear to be less important. Our analysis is based both on mathematical bounds and estimates, as well as numerical benchmarks and investigations. Theoretical estimates for the optimal parameters can be found for uniform distributions~\cite{gumerov03}, whereas for a generic adaptive distribution not much is known. Most implementations, if not all, manually or heuristically tune parameters to get the optimal values of parameters (e.g., \cite{aparna10}). 

One can extend our analysis to design an automatic way of choosing the optimal parameters. For example, we can define local parameters for different parts of the domain (i.e., the maximum depth and threshold do not have to be globally defined). However, this is a separate work that is not addressed here.




\section {Adaptive Fast Multipole Method} \label{AFMM}

We briefly introduce the adaptive FMM in this section, and refer the reader to \cite{zorin03} for a detailed explanation. The fundamental idea in FMM is to avoid detailed computation among particles that are far from each other, and approximate these with some low rank operator. This reduces complexity from ${\cal{O}}(N^2)$ to ${\cal{O}}(N)$. Hence, the main idea is to cluster points. In order to keep track of different clusters of particles we can use a tree data structure. In this section we explain and illustrate the scheme in a 2D domain for simplicity. Also, hereafter we assume observation points are the same as particle locations (i.e., $y$ is one of the $x_i$'s in Eq.~\eqref{FMM_SUM}). 

Consider a set of $N$ particles in a square domain, $[0, 1]^2 \subset \mathbb{R}^2$. To build our hierarchical data structure of clustered particles, we start from the original box (which is a $1\times1$ square in our 2D example), and subdivide it into four identical sub-domains. We keep subdividing new boxes as long as the number of particles per box is greater than some \emph{threshold} $t$, determined a priori. For instance, in Figure \ref{domain} the original box is subdivided up to 5 levels, assuming subdividing threshold is $t=8$. The largest box corresponds to the root of the tree, which has four children associated with four sub-boxes. Each of these four children may have up to four new children, and so forth. Therefore, each box in the geometrical (physical) domain is corresponding to a node in the tree (this is defined below). Note that in a $d$-dimensional computation, each node has $2^d$ children, therefore, a $2^d$-tree is the proper substitution for the quad-tree of the 2D case (e.g., octree if $d=3$).

\begin{definition} (physical interval associated to a tree node) As described above, the physical cube associated to a node $M$ in the octree is defined recursively as a $1/{2^d}$ cube of the physical cube of $M$'s parent in the tree. We denote this cube by ${\cal{I}}(M) \subseteq [0, 1]^d$, which is a \emph{closed} subset of $\mathbb{R}^d$ . $d \in \{1,2,3\}$ is the topological dimension of the problem.
\label{def:I_M}
\end{definition}

\begin{figure}[!htbp] \centering
\includegraphics[width=0.50\textwidth]{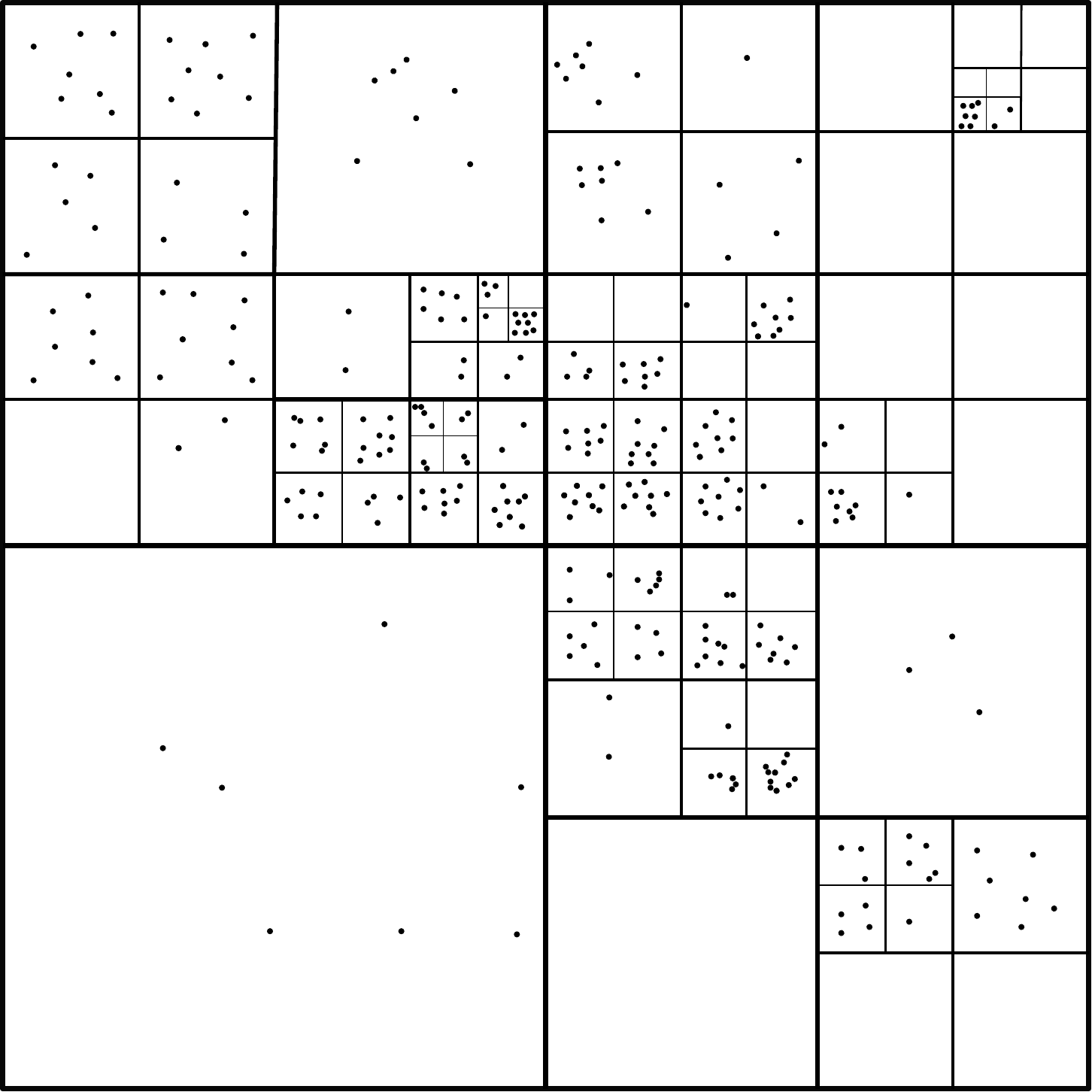}
\caption{A 2D non-uniform points distribution domain. The threshold for subdivision is $t=8$.}
\label{domain} 
\end{figure}

Now, based on the FMM's idea, we have to distribute the total computation amongst different nodes of the tree (i.e., each box in the domain). We will use the following definitions for the hierarchical tree similar to \cite{zorin03}.

\begin{definition} (node relations in the tree)
\begin{itemize}
\item Node $A$ is \emph{adjacent} to node $B$ if ${\cal{I}}(A) \cap {\cal{I}}(B) \ne \emptyset$.
\item Node $A$ is a \emph{parent} of node $B$ if $B$ is child of $A$ in the tree. We denote it by $A={\cal{P}}(B)$.
\item \emph{Depth} of a node $A$ in the tree is defined recursively as $\delta(A) = \delta( {\cal{P}}(A) ) +1$. Depth of the root is 0.
\item Node $A$ is a \emph{colleague} of node $B$ if they are adjacent and $\delta(A) = \delta(B)$.
\item Node $A$ is a \emph{leaf} if it has no children. It corresponds to boxes with at most $t$ particles.
\end{itemize}
\end {definition}

In the FMM, two sets of coefficients are defined for each node: multipole and local coefficients. Local coefficients consist of the contribution of all far particles to the particles within the node of interest. Multipole coefficients include the contribution of particles inside the node of interest to any far particle. The number of multipole and local coefficients assumed to be constant irrespective of the depth of the node in the tree. Considering a leaf cluster of particles, say C, we can divide its surrounding area into three regions: \emph{vicinity}, \emph{separated}, and \emph{far} (Figure \ref{regions}). In the following these regions are defined.

\begin{figure}[!htbp] \centering
\hspace{20mm}
\includegraphics[width=0.40\textwidth]{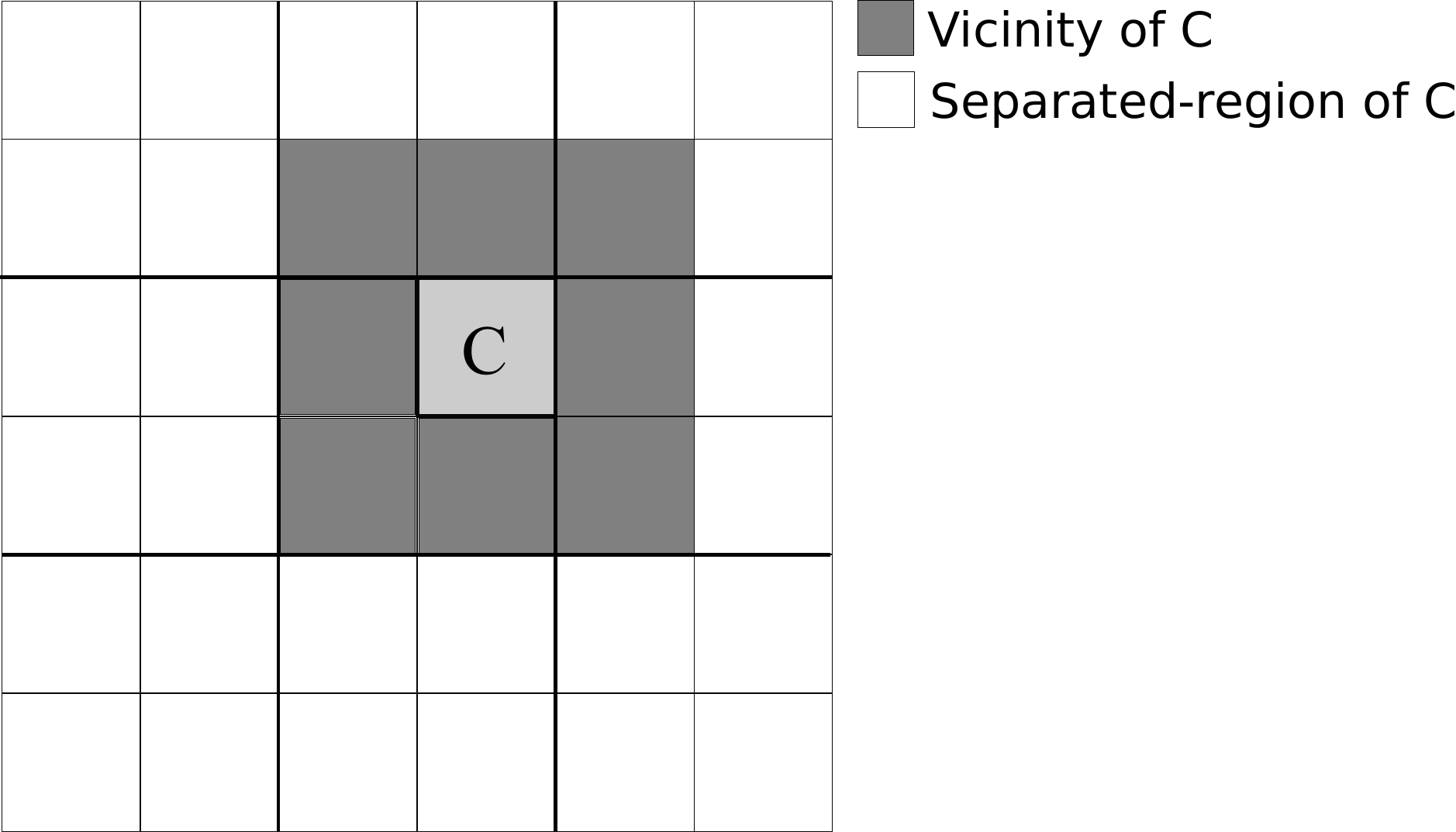}
\caption{Vicinity and separated regions around a node in the FMM tree.}
\label{regions} 
\end{figure}

\begin{definition} (regions around a cluster of particles)
\begin{itemize}
\item The \emph{vicinity} of a node C is defined as $\cal{V}($C$) = \cup_{\alpha} {\cal I}(\alpha)$, where $\alpha$ is a potential colleague of C, whether it exists in the tree or not (hence, the union is over 8 cubes in 2D, and 26 cubes in 3D).
\item Node $\alpha$ is \emph{separated} from C if $\alpha \cap \cal{V}(\cal{P}($C$)) \ne \emptyset$ and  $\alpha \cap \cal{V}($C$) = \emptyset$.
\item Node $\alpha$ is \emph{well-separated} from C, if it is separated from C and $\delta(\alpha) \le \delta(\mbox{C})$. Therefore, If $\alpha$ is \emph{well-separated} from C, then C is certainly separated from $\alpha$. Note that in the context of the adaptive FMM, being well-separated is a one-way relation.
\item Node $\alpha$ is \emph{far} from C, if it is neither in vicinity of C nor separated from C.
\end{itemize}
\end{definition}

The interaction of particles in a cluster C with the particles within its \emph{adjacent} area is presumably not low-rank, and a direct particle-to-particle (P2P) calculation is required. The interaction with particles in the \emph{separated} region, however, can be approximated with a low-rank operator, M2L (multipole to local). The particles lying in the \emph{far} region, by construction, are within the \emph{separated} region of C's ancestors. Therefore, C inherits the far-field contribution from its ancestors through the L2L (local to local) operator, and transfers the effect of its local particles through the the M2M (multipole to multipole) operator to its ancestors. In order to implement this hierarchical data structure to transfer information among nodes, we introduce the following lists for every node C of the tree:
\begin{definition} (interaction lists of a node C in the tree)
\begin{itemize}
\item U-List (only defined for leaf nodes): $\alpha \in$ U-List(C) if and only if $\alpha$ is a leaf node adjacent to C.
\item V-List: $\alpha \in$ V-List(C) if and only if both $\alpha$ and C are well-separated from each other.
\item W-List (only defined for leaf nodes): $\alpha \in$ W-List(C) if and only if C is well-separated from $\alpha$, but $\alpha$ is not well-separated from C.
\item X-List: $\alpha \in$ X-List(C) if and only if C $\in$ W-List($\alpha$).
\end{itemize}
\end{definition}
\begin{remark}
\item C $\in$ U-List(C).
\item $\alpha \in$ U-List(C) implies that C $\in$ U-List($\alpha$).
\item $\alpha \in$ V-List(C) implies that $\delta(\alpha) = \delta(\mbox{C})$. Also it implies that C $\in$ V-List($\alpha$).
\item $\alpha \in$ X-List(C) implies that $\alpha$ is a leaf (since it has a W-List).
\end{remark}

In Figure~\ref{lists}, the aforementioned lists are depicted for a leaf node of Figure~\ref{domain} as an example. Essentially, the U-List is designed to include all the nodes within the adjacent region. Therefore, a direct interaction with nodes in the U-List is required. The concept of separation provides the necessary condition for a low-rank approximation. V-List, W-List, and X-List include nodes with low-rank interactions. Consider two nodes $\alpha$ and $\beta$. When $\beta$ is \emph{separated} from $\alpha$, the effect of source points in node $\alpha$ to the observation points in node $\beta$ is approximated by $\alpha$'s multipole coefficients. Also, the effect of source points of $\beta$ to the observation points of $\alpha$ is added to the $\alpha$'s local coefficients. 

According to the above explanation, the adaptive FMM algorithm uses the following 8 sub-algorithms to compute the effect of all source points on the all observation points.
\begin{enumerate}
\item P2M: Aggregate particle information into the multipole coefficients. This is applied to all leaf nodes.
\item M2M: Compute the contribution of multipoles of a node on the multipoles of its parent (use post-order tree traversal).
\item M2L: Low-rank interaction between multipoles and locals of two separated nodes (use V-List).
\item P2L: Low rank interaction between particles and local coefficients (use X-List).
\item L2L: Compute the contribution of locals of a node on the locals of its children (use pre-order tree traversal).
\item M2P: Low-rank interaction between multipoles and particles (use W-List).
\item P2P: Direct interaction between particles (use U-List).
\item L2P: Interpolate local coefficients to particle location (applied to all leaf nodes).
\end{enumerate}	

\begin{figure}[!htbp] \centering
\includegraphics[width=0.50\textwidth]{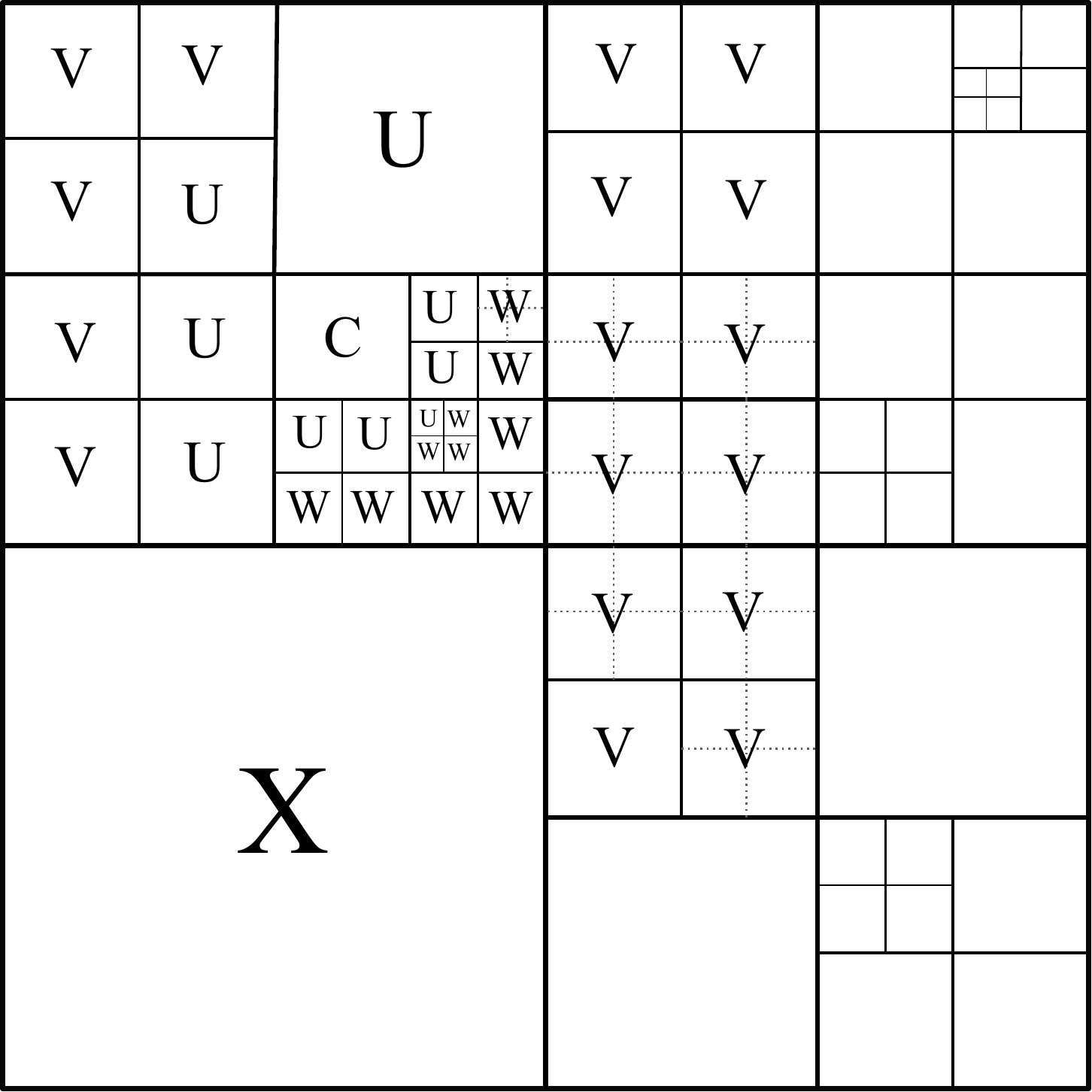}
\caption{Example of U, V, W, and X lists for a leaf node C in the example of Figure \ref{domain}.}
\label{lists} 
\end{figure}

\section {Proof of linear complexity} \label{proof}

In the previous section, we discussed how the complete algorithm consists of eight sub-algorithms: P2M, M2M, M2L, L2L, L2P, P2L, M2P, and P2P. 
Here, we show that the adaptive fast multipole method has an ${\cal{O}}(N)$ operation count, where $N$ is the number of particles. We assume observation and source points are the same, and are located in $\mathbb{R}^3$. Also we assume that every low-rank approximation has a constant rank, independent of the depth of the nodes.
 
Note that for the adaptive case, the octree is not necessarily full (i.e., each node does not necessarily have 8 children filled with particles). Therefore, the depth of the octree for the adaptive case is not bounded by ${\cal O} (\log N)$, compared with the uniform case. Nabors et.\ al.\ have proposed a proof for linear complexity of the adaptive FMM~\cite{nabors94}. \cite{nabors94} does not use the X and W lists. Instead, clusters interact only with clusters at the same level (with the exception of non-divided nodes, see~\cite{nabors94} p.~721). \cite{aluru1996greengard} also provides a proof for linear complexity of the FMM independent of distribution of points. However, they do not discuss the role of the subdividing threshold, and the adaptive interaction lists X and W. In ~\cite{sevilgen2000provably} the parallel implementation of their distribution independent algorithm is discussed. We present a new proof of the linear complexity with the usual set of interaction lists U, V, W, and X. This is done for each sub-algorithm separately. During the proof we introduce some modifications to the basic algorithm, which is necessary to get an ${\cal O}(N)$ complexity. Here, we assume a generic adaptive octree consists of $N$ particles with a given subdividing threshold $t$, and prove the linear complexity step-by-step.

\begin{figure}[!htbp] \centering
\includegraphics[width=0.40\textwidth]{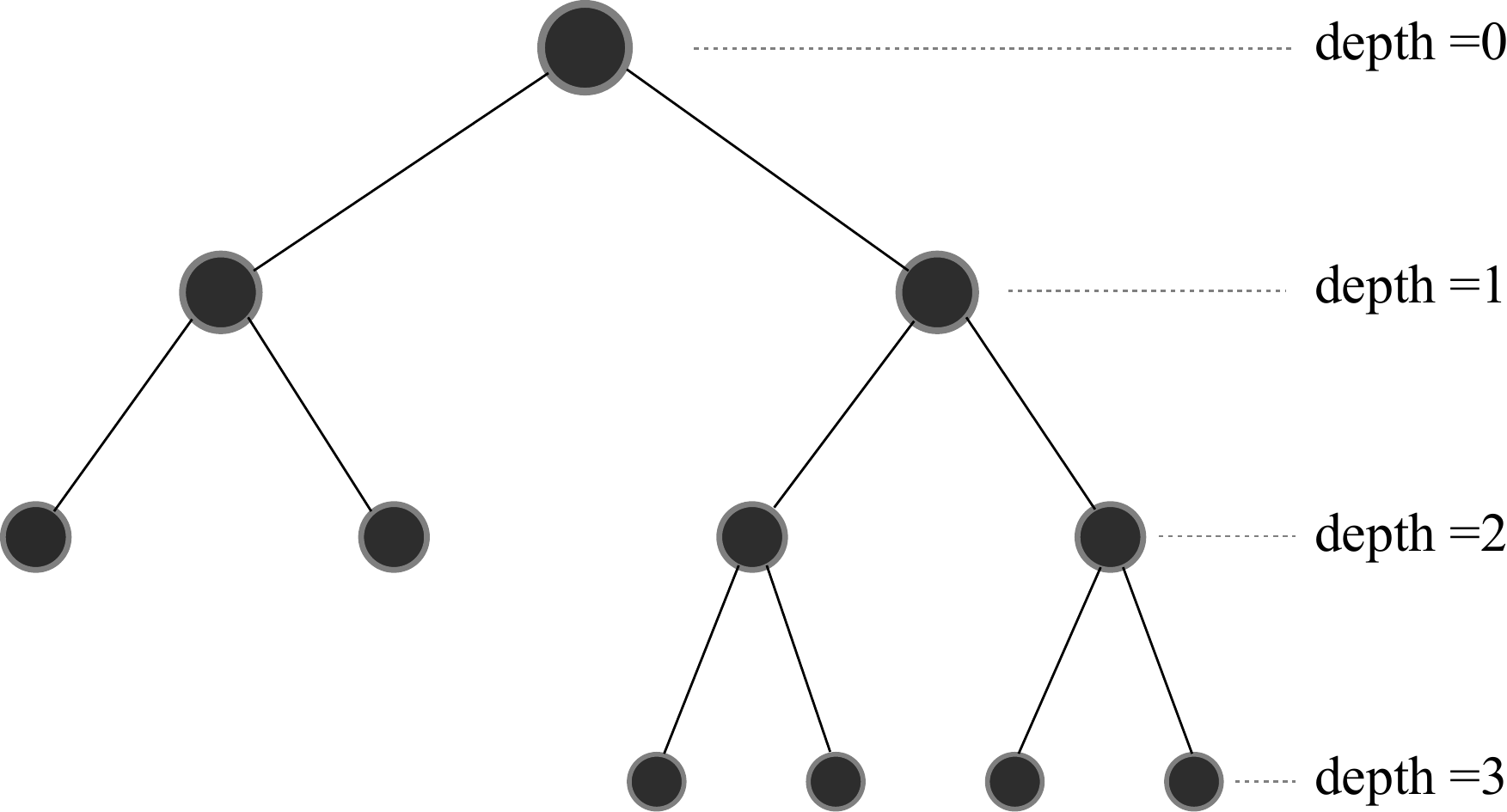}
\caption{Adaptive binary tree (for the 1D FMM). In the above tree, assuming that the subdividing threshold is 2, there is a total of 10 observation/source points. }
\label{adaptive_tree}
\end{figure}

\begin{lemma}
\emph{(P2M \& L2P)}
In any adaptive FMM tree with $N$ particles, there are exactly $N$ P2M, and $N$ L2P operations.
\end{lemma}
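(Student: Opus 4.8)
The plan is to count P2M and L2P operations directly from their definitions. Recall from the algorithm description that P2M is applied to \emph{all leaf nodes}, aggregating the particles inside each leaf into that leaf's multipole coefficients; likewise L2P is applied to all leaf nodes, interpolating the local coefficients back to the particles in that leaf. So the number of P2M operations equals the number of leaf nodes if we count one ``P2M operation'' per leaf, but the lemma counts $N$ of them, which means the intended accounting unit is one P2M per particle (each particle contributes once to the multipole expansion of the unique leaf containing it), and similarly one L2P per particle.

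The key structural fact I would invoke is that the leaves of the adaptive octree form a \emph{partition} of the particle set: by Definition~\ref{def:I_M} the physical cells subdivide into $2^d$ congruent children, and subdivision stops exactly when a cell is a leaf; hence every particle lies in exactly one leaf cell. (One should note the harmless boundary ambiguity — a particle on a shared face of two sibling cells — and dismiss it by the usual tie-breaking convention, e.g. half-open cells, so that each particle is assigned to a unique leaf; the ${\cal I}(M)$ being closed in the definition is only for the adjacency relation.) Summing the ``particles per leaf'' counts over all leaves therefore gives exactly $N$.

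Thus the argument is: (i) P2M is evaluated leaf-by-leaf, and within a leaf the cost is one particle-to-multipole contribution per contained particle; (ii) the leaves partition the $N$ particles; (iii) therefore $\sum_{\text{leaves }L} \#\{\text{particles in }L\} = N$ P2M operations in total. The identical three steps apply verbatim to L2P, reading ``local-to-particle'' for ``particle-to-multipole''. There is essentially no obstacle here — this is the base case of the step-by-step complexity proof and is meant to be immediate; the only thing requiring a word of care is making precise that ``number of P2M operations'' is counted per particle rather than per leaf, and confirming the leaf cells genuinely partition (rather than merely cover) the point set.
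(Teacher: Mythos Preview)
Your argument is correct and is essentially the same as the paper's: both count P2M and L2P at the per-particle level and use that the leaf cells partition the $N$ particles, so the total is exactly $N$ for each. Your version is a bit more explicit about the partition (tie-breaking on shared boundaries), which the paper leaves implicit, but there is no substantive difference.
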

\begin{proof}
The number of P2M (and similarly L2P) operations is independent of the particles distribution and subdividing threshold. Only the number of multipole/local coefficients per node (which depends on the algebraic low-rank approximation, e.g., Chebyshev polynomial interpolation) is relevant. Hence, there are a total of $2N$ P2M and L2P operations regardless of particles distribution and the subdividing threshold. \qed
\end{proof}

\begin{lemma}
\emph{(P2P)}
In any adaptive FMM octree with $N$ particles, the number of P2P operations is $\mathcal{O}(N)$.
\end{lemma}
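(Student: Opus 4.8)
The plan is to reduce the total P2P cost to a sum over pairs of adjacent leaves and to control that sum by splitting the pairs according to the relative depths of the two leaves. Writing $n_A$ for the number of particles contained in a leaf $A$, a single P2P call between leaves $A$ and $B$ costs a constant times $n_A n_B$, so, up to a global constant, the total P2P cost equals $T=\sum_{A\text{ leaf}}\sum_{B\in\text{U-list}(A)}n_A n_B$. Since U-list is a symmetric relation on leaves ($B\in\text{U-list}(A)$ iff $A\in\text{U-list}(B)$, both meaning that $A$ and $B$ are adjacent leaves), I would write $T=T_{<}+T_{=}+T_{>}$ according to whether $\delta(B)<\delta(A)$, $\delta(B)=\delta(A)$, or $\delta(B)>\delta(A)$; relabelling the ordered pair $(A,B)$ immediately gives $T_{<}=T_{>}$, so it suffices to bound $T_{=}$ and $T_{<}$.

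Two elementary facts drive the bound: every leaf contains at most $t$ particles, so $n_B\le t$ throughout; and $\sum_{A\text{ leaf}}n_A=N$, because each particle lies in exactly one leaf. For $T_{=}$: a same-depth leaf adjacent to $A$ is a colleague of $A$ (or $A$ itself), and there are at most $3^d$ such nodes, so $T_{=}\le 3^d\,t\sum_A n_A=3^d\,t\,N$. For $T_{<}$: write $T_{<}=\sum_A n_A\big(\sum_{B\in\text{U-list}(A),\ \delta(B)<\delta(A)}n_B\big)$ and invoke the geometric claim that a leaf $A$ is adjacent to at most $3^d$ leaves of depth strictly less than $\delta(A)$. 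This bounds the inner sum by $3^d t$, hence $T_{<}\le 3^d\,t\,N$, and therefore $T=T_{<}+T_{=}+T_{>}\le 3^{\,d+1}\,t\,N=\mathcal{O}(N)$, since $t$ is a fixed parameter of the tree.

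It remains to prove the geometric claim. Let $A_\ell$ be the depth-$\ell$ ancestor of $A$, for $\ell<\delta(A)$. If a leaf $B$ at depth $\ell$ is adjacent to $A$, then $\mathcal{I}(A)\subseteq\mathcal{I}(A_\ell)$ forces $\mathcal{I}(B)\cap\mathcal{I}(A_\ell)\ne\emptyset$, so $B$ is adjacent to $A_\ell$, and $B\ne A_\ell$ because $A_\ell$ is not a leaf (it has $A_{\ell+1}$ among its children); consequently $\mathcal{I}(B)$ and $\mathcal{I}(A)$ have disjoint interiors, since two octree cells are either nested or interior-disjoint. Now refine $\mathcal{I}(B)$ to the resolution of $\mathcal{I}(A)$: it becomes a block of cubes of $A$'s size, all interior-disjoint from $\mathcal{I}(A)$, at least one of which touches $\mathcal{I}(A)$ — hence that cube is one of the $3^d-1$ cubes of the same size as $\mathcal{I}(A)$ that share a face, edge, or vertex with it. Distinct leaves adjacent to $A$ have interior-disjoint cubes, so no two of them can contain the same such cube, which caps their number at $3^d-1$.

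I expect the geometric claim to be the only step requiring genuine care — in particular the reduction to ``$B$ must cover one of the finitely many fixed-size neighbours of $\mathcal{I}(A)$'', which is precisely what turns a potentially unbounded U-list into a bounded contribution once one also uses $n_B\le t$ and the symmetry $T_{<}=T_{>}$. Everything else is bookkeeping; for instance, empty leaves (if the implementation retains them) contribute $0$ to $T$ and may be discarded, so ``leaf'' can be read as ``nonempty leaf'' and the identity $\sum_A n_A=N$ still holds. No modification to the basic algorithm is needed to make the P2P step run in $\mathcal{O}(N)$.
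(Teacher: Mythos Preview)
Your proof is correct and follows essentially the same route as the paper's: both arguments hinge on the geometric fact that a leaf can be adjacent to at most $3^d-1$ leaves of depth no greater than its own, and from there bound the total P2P work by $\mathcal{O}(tN)$. The paper phrases this via a directed graph on leaves with out-degree at most $26$ and then bounds each pair's cost by $t^2$ (using $N_l\le N$) rather than your sharper $n_A n_B\le t\,n_A$ with $\sum_A n_A=N$, but the substance --- and in particular the key geometric claim you spell out in your last paragraph --- is the same.
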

\begin{proof}
Define a directed graph $G(V,E)$ as follows. There is a one-to-one mapping between the leaf nodes in the octree and the vertices in $V$. Then, there is an edge $e_{ij} \in E$ from vertex $v_i$ to vertex $v_j$ if and only if, node $i$ is adjacent to node $j$ in the octree, and the depth of node $j$ is less than or equal to the depth of node $i$.

In the octree, each node has at most $3^3-1=26$ adjacent nodes at depth less than or equal to its own depth. Therefore, in the graph $G$, each vertex has at most 26 outgoing edges. This means that there are at most $26N_l$ edges in the graph, where $N_l$ is the number of leaf nodes in the octree, and clearly, $N_l \le N$ (in an average sense $N_l \sim N/t$). A P2P operation between two nodes takes ${\cal{O}}(t^2)$ CPU cycles. Hence, the total number of CPU cycles associated to the P2P part of the algorithm is ${\cal{O}}(N_l t^2)$, which is ${\cal{O}}(N)$ for a given threshold $t$. \qed
\end{proof}

For the rest of our complexity analysis, we need to define the concept of \emph{extended tree}. This helps us to find appropriate upper bounds, as will be explained.
\begin{definition} (extended tree) The \emph{extended tree} is obtained from a basic tree (Figure \ref{adaptive_tree}) by performing the following modifications. 

\begin {itemize}
\item Denote the depth of the tree by $\lmax$.
\item For every non-leaf node consider all of its 8 children. Some of them are occupied with particles, and some of them are empty. We call the former an \emph{occupied node}, and the latter an \emph{empty node}. Add empty nodes to the tree.
\item Consider all leaf nodes with depth less than $\lmax$. They may be occupied or empty. Subdivide leaf nodes to 8 children, and continue this process until reaching the deepest level of the tree (i.e., $\lmax$).
\label{step:unif}
\end{itemize}

Doing the above modifications, we end up with a tree whose leaves are all in the deepest level. Leaves are either occupied or empty. For instance, the binary tree with 10 particles shown in Figure \ref{adaptive_tree} transforms to the extended binary tree in Figure \ref{extended_tree}.
\end{definition}

\begin{figure}[!htbp] \centering
\includegraphics[width=0.5\textwidth]{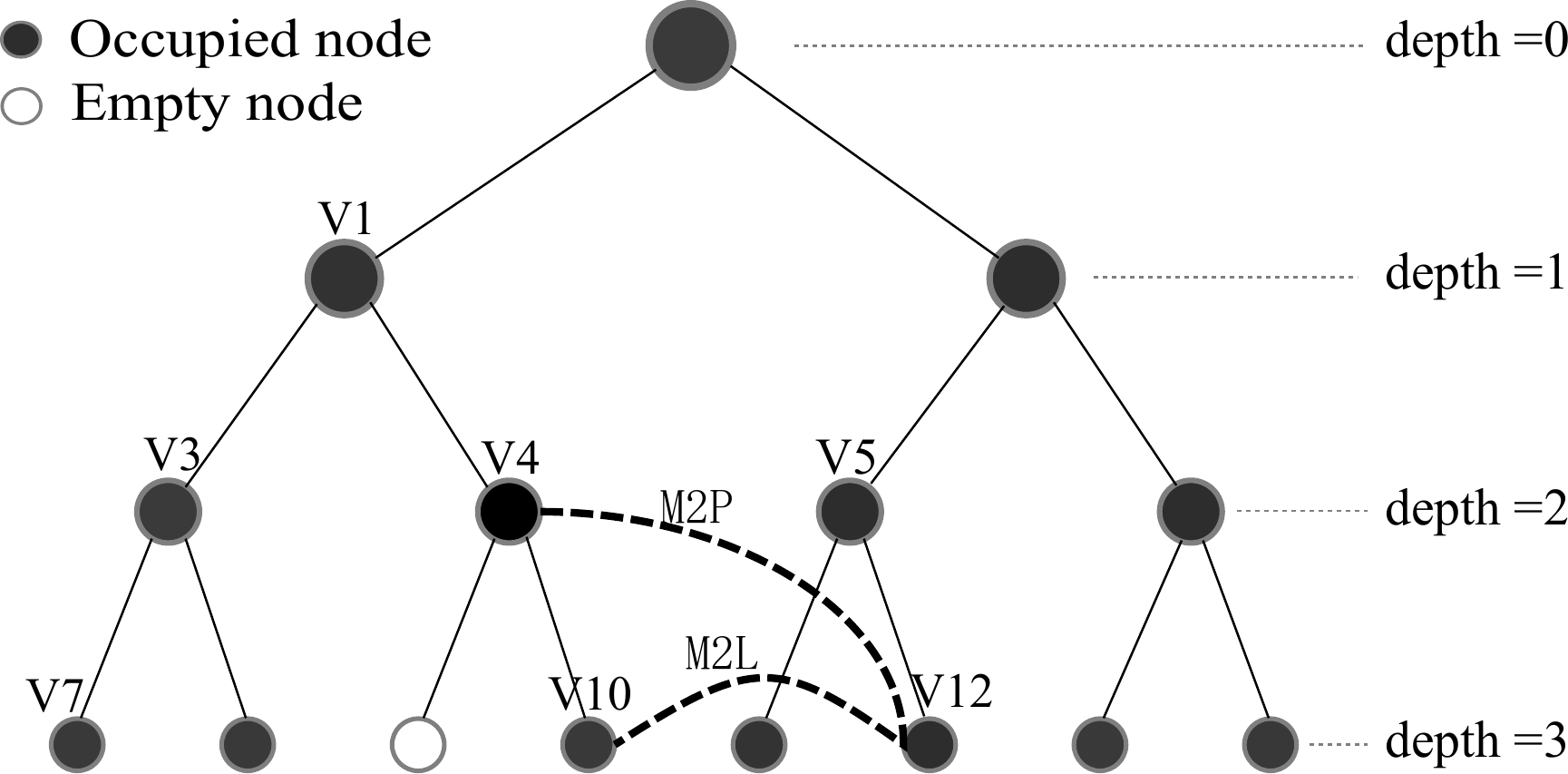}
\caption{Extended adaptive binary tree corresponding to Figure \ref{adaptive_tree}. By definition, in the extended tree all leaves lie in the deepest level. An example of an M2P operation transformed to a new M2L operation is depicted.}
\label{extended_tree}
\end{figure}

\begin{lemma}
\emph{(M2P \& P2L)}
In the extended adaptive FMM tree, there are no M2P and P2L operations.
\end{lemma}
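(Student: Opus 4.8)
The plan is to prove the stronger combinatorial fact that in the extended tree the W-list of every leaf and the X-list of every node are empty. Since M2P operations occur only for pairs $(C,\beta)$ with $C$ a leaf and $\beta\in$ W-List$(C)$, and P2L operations only for pairs with $\alpha\in$ X-List$(C)$, this immediately yields the lemma. The only feature of the extended tree I will use is the one it was constructed to have: all of its leaves lie at the bottom level, so $\delta(C)=\lmax$ for every leaf $C$, whereas $\delta(\alpha)\le\lmax$ for every node $\alpha$.

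First I would show that a W-list always points strictly downward: $\beta\in$ W-List$(C)$ forces $\delta(\beta)>\delta(C)$. By definition $\beta\in$ W-List$(C)$ means $C$ is well-separated from $\beta$ but $\beta$ is not well-separated from $C$. Applying the one-way implication recorded just after the definition of ``well-separated'' --- with the two nodes interchanged --- to ``$C$ well-separated from $\beta$'' shows that $\beta$ is separated from $C$; hence the only way ``$\beta$ well-separated from $C$'' can fail is through its depth clause, giving $\delta(\beta)>\delta(C)$. Combined with the structural fact, this empties every W-list of the extended tree, since for a leaf $C$ it would require a node strictly deeper than $\lmax$, and there is none; so there are no M2P operations. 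For P2L, recall $\alpha\in$ X-List$(C)$ if and only if $C\in$ W-List$(\alpha)$, and (as noted in the remark) this forces $\alpha$ to be a leaf; but then $\alpha$ is a leaf of the extended tree, so W-List$(\alpha)=\emptyset$ by the above, and therefore X-List$(C)=\emptyset$ for every node $C$ and there are no P2L operations either.

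I do not expect a genuine obstacle here: once the right statement is isolated, the argument is a short unwinding of the interaction-list definitions together with the single observation that extending the tree pushes every leaf to the maximal depth. The point that deserves the most care --- and the step I would write out in full --- is the claim that every W-list entry is strictly deeper than its host leaf, i.e.\ that the asymmetry built into ``well-separated'' is carried entirely by the depth condition rather than by the underlying geometric ``separated'' relation; this is exactly where the stated implication ``if $A$ is well-separated from $B$ then $B$ is separated from $A$'' is used. It is also worth recording the interpretive remark illustrated by Figure~\ref{extended_tree}: an M2P or P2L of the original tree does not vanish but is realized in the extended tree as an M2L between the deep node and one of the new descendants produced when the shallow leaf is refined down to level $\lmax$ (with the corresponding M2M and L2L transfers), which is where the cost of those operations reappears when one later bounds the M2L count.
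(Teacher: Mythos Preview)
Your proposal is correct and follows essentially the same route as the paper: the paper's proof is the single sentence ``By definition, in the extended octree all leaves lie in the deepest level, so no M2P and P2L is possible,'' and you have simply unpacked that sentence by showing explicitly that $\beta\in$ W-List$(C)$ forces $\delta(\beta)>\delta(C)$, which is impossible when every leaf sits at $\lmax$. Your closing interpretive remark about M2P/P2L reappearing as M2L in the extended tree also mirrors the paper's post-\qed\ discussion.
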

\begin{proof}
By definition, in the extended octree all leaves lie in the deepest level, so no M2P and P2L is possible. \qed

Note that as we transform the basic octree to the extended octree, each M2P operation transforms to new M2L operations (at lease one new M2L). For instance, in Figure \ref{extended_tree}, the M2P operation between nodes $V4$ and $V12$, which exists in the basic tree of Figure \ref{adaptive_tree}, transforms to an M2L between nodes $V10$ and $V12$. Basically, in the tree extension algorithm, all M2P and P2L (which is the dual of M2P) operations transform to new M2L operations, such that the number of M2L operations in the extended tree is not less than the total number of M2Ls, M2Ps, and P2Ls in the original octree. Therefore, it is sufficient to show that number of $M2L$ operations in the extended octree is linear with $N$. Note that the number of CPU cycles associated to each M2P and P2L operation is only function of $t$ and $r$ (the low-rank approximation), and the flops of each M2L is only function of $r$. Hence, assuming constant $t$ and $r$, it is sufficient to count the total number of operations to show the linear complexity with respect to $N$.
\end{proof}

\begin{definition} (divided node, parent, and children)
\begin{itemize}
\item A \emph{divided node} in the tree is an occupied node that has at least two occupied children. Similarly, a \emph{singleton node} is defined as an occupied node that has exactly one child. Note that leaves and empty nodes are neither divided, nor singleton.
\item The \emph{divided parent} of a node $\beta$ is defined as its deepest ancestor who is a divided node or the root. For instance, in Figure \ref{extended_tree} $V1$ is the divided parent of $V10$. 
\item The \emph{divided children} of a given node $\beta$ is defined as its least deep (shallowest) descendant which is a divided node or a leaf.
\end{itemize}
\end{definition}

\begin{lemma}
(number of divided nodes)
\label{lemma1}
For a generic adaptive extended octree $T$, the number of divided nodes is less than $N$.
\end{lemma}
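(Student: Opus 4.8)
The plan is to count the divided nodes by embedding them into an abstract forest in which every internal node has at least two children, and then to invoke the elementary fact that such a forest has strictly fewer internal nodes than leaves. The leaves of this forest will be the occupied leaf nodes of the octree, of which there are at most $N$, and its internal nodes will be exactly the divided nodes.

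First I would establish a descent step: starting from a divided node $\beta$ and one of its occupied children $c$, follow the unique chain of occupied descendants of $c$ until reaching a node that is a divided node or a leaf. Such a node exists and is occupied, because an empty node has only empty descendants (so the chain never leaves the occupied part of the tree), and every occupied node is a leaf, a singleton, or a divided node, so the chain continues only as long as it passes through singletons. Since the eight children of any node sit in disjoint subcubes, two distinct occupied children of $\beta$ produce two distinct such stopping nodes; hence every divided node has at least two ``divided children'', each of which is itself a divided node or an occupied leaf. Moreover, all nodes strictly between $\beta$ and a stopping node $\gamma$ are singletons, so $\beta$ is precisely the divided parent of $\gamma$.

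Next I would assemble these relations into a directed graph $H$ whose vertices are the divided nodes together with the occupied leaves, with an edge from each divided node to each of its divided children. By the previous step every edge terminates at a vertex of $H$, each divided node has out-degree at least $2$, each occupied leaf has out-degree $0$, and each vertex has in-degree at most $1$, since an incoming edge can only originate at its unique divided parent. Because edges strictly increase depth, $H$ is acyclic and hence a forest. Write $a$ and $b$ for the numbers of divided nodes and of occupied leaves. Counting edges by their tails gives $2a \le |E(H)|$; and since $H$ is a forest with $k \ge 1$ connected components, $|E(H)| = |V(H)| - k = (a+b) - k \le (a+b) - 1$. Hence $2a \le (a+b)-1$, i.e.\ $a \le b-1$. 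Since distinct leaves of the octree contain disjoint nonempty sets of particles, $b \le N$, and therefore $a \le N-1 < N$.

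I expect the only genuinely delicate point to be the descent step — verifying that following occupied children never escapes into empty cells, and that distinct children really do yield distinct divided children — after which the forest inequality and the bound $b \le N$ are routine. It is also worth checking the degenerate cases (no divided nodes at all, or a pure chain of singletons terminating in a single occupied leaf): these are covered automatically once $N \ge 1$, since then $|V(H)| \ge 1$ and $k \ge 1$.
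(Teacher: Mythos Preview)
Your proof is correct and reaches the same sharp bound $a\le N_l-1$ as the paper. The underlying combinatorics is the same edge/vertex count in a rooted tree, but the presentations differ: the paper works directly in the full occupied tree and proves by induction on depth the identity $\sum_{v_i\ \text{occupied non-leaf}}(d(v_i)-1)=N_l-1$, then observes that each divided node contributes at least $1$ to the sum. You instead \emph{contract} the singleton chains first, building the forest $H$ on divided nodes and occupied leaves, and then apply the elementary forest inequality $2a\le |E(H)|\le (a+b)-1$. Your route avoids the induction entirely and makes the branching hypothesis explicit; the paper's route has the advantage that the same identity is reused verbatim in the subsequent M2M/L2L lemma to bound $\sum_{\text{divided}}d(v_i)<2N$ (though your edge count $|E(H)|\le a+b-1$ yields that bound just as easily). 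One small remark: in your in-degree argument you appeal to the ``unique divided parent,'' but note that the paper's definition allows the divided parent to be the root even when the root is not divided; in your graph such a vertex simply has in-degree $0$, which is consistent with your forest structure, so nothing breaks.
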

\begin{proof}
We can show a stronger result by induction on the depth of the extended octree,  $l$: 
\begin{equation}
\label{lemma_ineq}
\sum_{v_i \in \mbox{ occupied non-leaf nodes}} (d(v_i)-1) =N_l-1
\end{equation}
where, $d(v_i)$ is the number of occupied children $v_i$ has, and $N_l$ is the number of occupied leaves.

The case $l=0$ is trivial.
Consider an extended octree with depth $l$. Note that all leaves in the extended octree lie in the deepest level. Remove the last (deepest) level of the tree. Call the remaining octree $T'$, which is an extended octree with depth $l-1$, and $N'_l$ occupied leaves. From the induction hypothesis the equality holds for $T'$. Now, subdivide all leaves in $T'$ to rebuild $T$. Empty leaves do not contribute to the above summation for both $T$ and $T'$. Consider an occupied leaf $v'_i$ in $T'$ that is subdivided to $d(v'_i)$ occupied leaves in $T$. Essentially, going from $T'$ to $T$, we loose one leaf ($v'_i$), and add $d(v'_i)$ new leaves. So the total number of leaves is increased by $\sum (d(v'_i)-1)= N_l- N'_l$. Add $\sum (d(v'_i)-1)$ to the left hand side, and $N_l- N'_l$ to the right hand side of the equality for $T'$ to obtain the equality for $T$.

Now, note that $d(v_i)-1$ is non-zero for all divided nodes. So:
\begin{equation}
\text{Number of divided nodes} \le \sum_{v_i \in \text{ occupied non-leaf nodes}} (d(v_i)-1)=N_l-1 \le N-1
\end{equation}
The last inequality holds, since each occupied leaf consists of at least one particle. \qed
\end{proof}

\begin{lemma}
\emph{(M2L)}
In any adaptive FMM tree with $N$ particles, the number of M2L operations is $\mathcal{O}(N)$.
\end{lemma}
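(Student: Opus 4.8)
The plan is to reduce the count to the number of M2L operations in the \emph{extended} tree and then amortize that number against the divided nodes, whose number is controlled by Lemma~\ref{lemma1}.

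\textbf{Reduction.} By the preceding lemma and the remark in its proof, the extended tree has no M2P or P2L operations, and every M2P or P2L of the original tree is replaced by at least one M2L of the extended tree; hence the number of M2L operations of the extended tree is an upper bound for the combined M2L, M2P and P2L count of the original tree, and it suffices to bound the former by $\mathcal{O}(N)$. Moreover, an empty node carries vanishing multipole and local coefficients, so only M2L operations between two \emph{occupied} nodes matter; and for every node C the V-List is contained in the refinement to level $\delta(\text{C})$ of $\mathcal{V}(\mathcal{P}(\text{C}))$ minus $\mathcal{V}(\text{C})$, which gives the familiar bound $|\text{V-List}(\text{C})|\le 6^{d}-3^{d}$ ($189$ for $d=3$).

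\textbf{The obstacle and the charging.} A single tight cluster of particles already produces an arbitrarily long chain of occupied singleton nodes in the extended tree, so the naive estimate $(6^{d}-3^{d})\cdot(\#\text{occupied nodes})$ is useless. Instead I would charge each M2L operation to a divided node. Given an operation between occupied cells C and $\alpha$ at depth $l$, let $D=\mathrm{LCA}(\text{C},\alpha)$. Since $\mathcal{I}(\text{C})$ and $\mathcal{I}(\alpha)$ are disjoint and both are occupied, $D$ has at least two occupied children --- one on the path to C, say $D_{\text{C}}$, and one on the path to $\alpha$, say $D_{\alpha}$ --- so $D$ is a divided node (or the root, which has no V-List). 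By Lemma~\ref{lemma1} there are fewer than $N$ divided nodes, so the task reduces to bounding the charge that a single divided node receives.

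\textbf{A slab estimate.} The cells $D_{\text{C}}$ and $D_{\alpha}$ are distinct siblings, hence adjacent; let $F$ be their shared (lowest-dimensional) face. At depth $\delta(D)+1$ two children of $D$ are never separated, so in fact $l\ge\delta(D)+2$ and thus $\mathcal{I}(\mathcal{P}(\text{C}))\subseteq\mathcal{I}(D_{\text{C}})$. The separation conditions $\mathcal{I}(\alpha)\cap\mathcal{V}(\mathcal{P}(\text{C}))\ne\emptyset$, $\mathcal{I}(\alpha)\cap\mathcal{V}(\text{C})=\emptyset$ and their symmetric counterparts then force $\mathcal{I}(\text{C})$ and $\mathcal{I}(\alpha)$ each to lie within $2\cdot 2^{-l}$ of $F$. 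Hence at depth $l$ the cell C is confined to a slab of bounded thickness against a face of $D_{\text{C}}$, i.e.\ to one of $\mathcal{O}\!\big(2^{(d-1)(l-\delta(D))}\big)$ cells, and it must be occupied; likewise for $\alpha$. Summing over the $\le\binom{2^{d}}{2}$ sibling pairs of $D$ and over depths $\delta(D)<l\le\lmax$, the geometric factor contributes a convergent series ($\mathcal{O}(n_{D})$, with $n_{D}$ the number of particles in $D$'s subtree) in the coarse regime, while in the fine regime the occupancy constraint dominates: a cell that is still within $\mathcal{O}(2^{-l})$ of $F$ at a scale far below $\delta(D)$ must contain a particle essentially on $F$.

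\textbf{Main obstacle.} The step I expect to be the hardest is making this last observation quantitative. The clean way is to re-route the charge: rather than to $\mathrm{LCA}(\text{C},\alpha)$, charge each operation to the \emph{deepest} divided node on the two ancestor chains that still witnesses the separated pair, and then show --- again via the slab estimate together with Lemma~\ref{lemma1} applied inside a subtree --- that under this rule every particle is charged only $\mathcal{O}(1)$ times; this is exactly where the long-singleton-chain pathology gets absorbed. Granting it, the number of M2L operations in the extended tree, and hence the combined M2L, M2P and P2L count of the original adaptive tree, is at most $(6^{d}-3^{d})\cdot\mathcal{O}(N)=\mathcal{O}(N)$, as claimed.
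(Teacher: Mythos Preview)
Your reduction to the extended tree and the intuition that the charge must be routed through divided nodes are both exactly right, and the paper proceeds along the same line. The gap is that you do not actually close the argument.

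Charging to the LCA with the slab estimate does not give $\mathcal{O}(N)$. Your own bound says the charge received by a divided node $D$ is $\mathcal{O}(n_D)$, and summing $n_D$ over all divided nodes yields $\sum_D n_D$, which is $\Theta(N\log N)$ already for a uniform distribution (each particle lies in $\Theta(\log N)$ divided ancestors). So the ``coarse regime'' estimate is too weak, and the ``fine regime'' claim --- that an occupied slab cell far below $\delta(D)$ forces a particle essentially on the face $F$ --- is exactly the point that needs proof and is left as a heuristic. You recognize this and propose the correct remedy: charge to the \emph{deepest} divided node on the $\alpha$--$\beta$ path rather than to the LCA. But you then write ``granting it'' and stop; that is the entire content of the lemma.

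The paper carries out precisely this deepest-divided-node charging, and the point is that under this rule the charge per divided node is bounded by an \emph{absolute constant}, not by $n_D$. No slab geometry is needed. For a divided node $S$: (i) at most $6^3-3^3=189$ M2L operations have $S$ itself as an endpoint; (ii) for each of the $8$ children $K_i$ of $S$ and each of its at most $6^3-1=215$ not-far neighbors $N_{i,j}$, there is at most \emph{one} M2L pair $\{\alpha,\beta\}$ with $\alpha$ below $K_i$, $\beta$ below $N_{i,j}$, that gets mapped to $S$. The uniqueness is the crux: if $N_{i,j}$ is already well-separated from $K_i$ then $\{\alpha,\beta\}=\{K_i,N_{i,j}\}$; if $N_{i,j}$ is a colleague of $K_i$, then both subtrees must be singleton chains down to $\alpha,\beta$ (otherwise a deeper divided node would steal the charge), and one simply descends until the first well-separated pair appears. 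This gives at most $189+8\cdot215=1909$ charges per divided node, hence $\mathcal{O}(N)$ M2L operations by Lemma~\ref{lemma1}. Your proposal identifies the right charging target but omits this uniqueness argument, which is where the long-singleton-chain pathology is actually absorbed.
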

\begin{proof}
To show linear complexity, we build the following bipartite graph. One set of nodes is the set of all M2L operations. The other set is the set of all divided tree nodes. We construct edges from each M2L operation to one or two divided tree nodes. We showed previously (Lemma~\ref{lemma1}) that the number of divided nodes is $\mathcal{O}(N)$ (in fact, it is $\mathcal{O}(N_l)$). We will then bound the maximum number of edges incoming on each divided node. This proves that the maximum number of M2L operations is $O(N)$.

Consider two generic nodes $\alpha$ and $\beta$. In a tree, there is a single path that connects $\alpha$ to $\beta$ and that visits a node at most once.
Edges in our bipartite graph connect an M2L operation between two generic nodes $\alpha$ and $\beta$ to a node $S$ such that $S$ is the {\bf deepest divided node in the path from $\alpha$ to $\beta$} including $\alpha$ and $\beta$ themselves. Since the path is unique, there are at most two nodes $S$ that are connected to an M2L operation.

Consider for example Figure \ref{extended_tree}. The M2L operation between nodes $V10$ and $V12$ is connected to node $V12$, and the M2L operation between nodes $V10$ and $V7$ is connected to node $V3$. In some cases, the divided node $S$ is not unique, e.g., the M2L between nodes $V3$ and $V5$ in Figure \ref{extended_tree}. In that case, the M2L is connected to both possible divided nodes.

Now, we show that the maximum number of edges incoming on some divided node $S$ is bounded by a constant.

If the M2L operation between nodes $\alpha$ and $\beta$ is mapped to $S$, by definition, at least one of $\alpha$ and $\beta$ should be a descendant of $S$, or $S$ itself.

If one of $\alpha$ or $\beta$ is $S$, then there are at most $6^3-3^3=189$ M2L operations of this type mapped to $S$, since this is the maximum size for the V-List of $S$.

Consider now all M2L operations connected to $S$ such that neither $\alpha$ nor $\beta$ is $S$. Without loss of generality, assume that $\alpha$ is a descendant of $S$ (if not true, then $\beta$ must be a descendant). Node $\alpha$ must belong to one of the children of $S$, $K_1$, $K_2$, \ldots, or $K_8$.

For each $K_i$, we now consider the maximum number of M2L such that $\alpha$ is a descendant of $K_i$ or $K_i$ itself. Given $K_i$, there are at most 215 not-far neighbors,\footnote{A not-far neighbor of a node C is a node with the same depth, which is either its colleague or a child of its parent's colleagues. This is basically the union of the colleagues and the V-List nodes (Figure \ref{regions}). Therefore, each node has at most $6^3-1=215$ not-far neighbors. Note that C or any of its descendants may only have M2L interactions with one of C's not-far neighbors and their descendants.} $N_{i,1}$, $N_{i,2}$, \ldots, $N_{i,215}$. At most 189 of them are well-separated from $K_i$, and at most 26 are colleagues of $K_i$ (they cannot be $K_i$ itself because $\beta$ cannot be in $K_i$ at this point). The node $\beta$ must be a descendant of one of these not far neighbors (this is true by definition of $S$).

We now show that for any pair $(K_i,N_{i,j})$, there is at most one set $\{ \alpha, \beta \}$ such that: $\alpha$ is $K_i$ or its descendant, $\beta$ is $N_{i,j}$ or its descendant, and there is an M2L between $\alpha$ and $\beta$ that is mapped to $S$. If we prove this, we will have established that there cannot be more than $189 + 8 \times 215 = 1909$ incoming edges on $S$.

For this final point, there are now only 2 cases. If $N_{i,j}$ is well-separated from $K_i$ (i.e., $N_{i,j}$ is in the V-List of $K_i$) then we must have $\alpha=K_i$ and $\beta=N_{i,j}$. In this case, we are guaranteed that there is no more M2L between descendants of $\alpha$ and $\beta$. The set $\{ \alpha, \beta \}$, if it exists, must be unique.

If $N_{i,j}$ is a colleague of $K_i$ and the M2L operation $\{ \alpha, \beta \}$ exists, both $N_{i,j}$ and $K_i$ must be singleton nodes (otherwise, the M2L would not be mapped to $S$). Because of the definition of $S$, we further have that the two subtrees starting at $K_i$ and $N_{i,j}$ must be branches of singleton nodes, at least until $\alpha$ and $\beta$ are reached. Moreover, no other M2L interaction can exist along these two branches. Essentially to find $\alpha$ and $\beta$, we simply go down the tree starting at $K_i$ and $N_{i,j}$ until we find two nodes that are well-separated. These must be $\alpha$ and $\beta$ and, consequently, the set $\{ \alpha, \beta \}$ is unique. \qed
\end{proof}

So far, we have discussed the complexity of six out of eight operations of the adaptive FMM. However, without the following modification, the basic adaptive FMM is not ${\cal O} (N)$. In fact, for a given $N$, it can be arbitrarily large compared with $N$. Consider a very long branch of singleton nodes ending at a leaf (see Example \ref{crazy_dist}). This branch of tree may be generated only due to one particle, and there is no limit on its depth. Such cases also have been considered in ~\cite{aluru1996greengard}. Consequently, performing the basic M2M (and similarly L2L) operator on this branch, we could add infinitely large cost to the algorithm. This is why we have to modify the algorithm. This modification is similar to the modification introduced in ~\cite{aluru1996greengard}.

\begin{definition} (modified adaptive FMM)
In the \emph{modified adaptive FMM algorithm} we do not consider multipole and local coefficients for singleton nodes. Specifically, for M2M, we pass the multipole coefficients of a non-singleton node \emph{directly} to its divided parent (as opposed to passing the multipole coefficients step-by-step through all the singleton ancestors). Similarly for L2L, we pass the local coefficients of a divided node directly to its divided children. For example, in the binary tree of Figure \ref{extended_tree}, the multipoles of node $V10$ directly contribute to its divided parent $V1$.
\end{definition}

\begin{remark}
M2L operations for singleton nodes, which in the modified algorithm do not have multipole and local coefficients anymore, are being taken care by their divided children. Moreover, going from the basic algorithm to this modified algorithm, the number of M2L operations does not change. Therefore, the proof for the M2L still holds.
\end{remark}

\begin{lemma}
\emph{(M2M \& L2L)}
In the modified adaptive FMM algorithm with $N$ particles, the number of M2M and L2L operations is $\mathcal{O}(N)$.
\end{lemma}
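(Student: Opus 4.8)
The plan is to count the M2M operations directly and then invoke Lemma~\ref{lemma1}. In the modified algorithm a node carries multipole coefficients only when it is \emph{not} a singleton, i.e., only when it is an occupied leaf or a divided node. Each such node $v$, except the root, generates exactly one M2M operation, namely the one that passes its multipole coefficients directly to its divided parent. Consequently the total number of M2M operations is at most $N_l + (\text{number of divided nodes})$, where $N_l$ denotes the number of occupied leaves. Lemma~\ref{lemma1} gives that the number of divided nodes is at most $N_l - 1$, and trivially $N_l \le N$, so the number of M2M operations is at most $2N_l - 1 < 2N$. Since the cost in flops of a single M2M operation depends only on the rank $r$ of the low-rank approximation, which is assumed constant, the M2M phase costs $\mathcal{O}(N)$.

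For L2L I would argue by duality: in the modified algorithm a local expansion is passed directly from a divided node (or the root) to each of its divided children, and every non-root occupied leaf-or-divided-node is the divided child of exactly one node, its divided parent. Hence the number of L2L operations equals the number of M2M operations, and the same bound applies.

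The point that needs care is the indexing of the operation count. Each M2M in the modified scheme is of the form ``non-singleton occupied node $\to$ its divided parent'', so the operations are counted by the children (the non-singleton occupied nodes), not by the parents; a single divided parent may receive many such contributions, but that merely raises its in-degree and does not affect the total count. This indexing is also exactly what neutralizes a long chain of singleton nodes such as the one in Example~\ref{crazy_dist}: the whole chain carries at most one M2M (and one L2L) operation, linking the leaf at its bottom to the divided node above it, rather than one operation per level — which is precisely the pathology the modification was introduced to avoid. Beyond this bookkeeping the argument is immediate once Lemma~\ref{lemma1} is in hand, so I do not expect a serious obstacle.
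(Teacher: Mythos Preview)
Your proposal is correct and essentially matches the paper's proof: both immediately invoke Lemma~\ref{lemma1} and obtain the bound $2N_l-1<2N$. The only difference is bookkeeping --- you count the M2M operations by indexing them by the child node (non-singleton occupied nodes except the root), whereas the paper indexes them by the parent, summing $d(v_i)$ over divided nodes $v_i$; these are dual counts of the same set of operations and yield the same bound.
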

\begin{proof}
As explained, in the modified adaptive FMM, M2M and L2L operate between a divided node and one of its non-singleton children. From lemma~\ref{lemma1},	 we have a bound on these pairs:
\begin{gather*}
\mbox{Number of divided nodes} \le \sum_{v_i \in \text{ occupied non-leaf nodes}} (d(v_i)-1)=N_l-1 < N \\
\Rightarrow \sum_{v_i \in \text{ divided nodes}} d(v_i) = \left( \sum_{v_i \in \text{ divided nodes}} (d(v_i)-1) \right) + \text{Number of divided nodes} < 2N
\end{gather*}
This shows that the number of pairs of nodes that require M2M and L2L operators is bounded by $2N$ (in fact, it is bounded by $2N_l$). \qed
\end{proof}

\section {Numerical consideration} \label{numerical}

In \S \ref{proof}, we established an upper bound for each sub-algorithm of the adaptive FMM. In this section, we compute the CPU cylce count of the algorithm, and then show some numerical results to verify the linear complexity. We chose a low-rank approximation based on the Chebyshev polynomial interpolation introduced in~\cite{fong09}.

\subsection{CPU cycle count}

Using the scheme described in \cite{fong09}, we assumed $r$ Chebyshev points in each direction. Hence, for a 3D calculation, each node has $r^3$ multipole and local coefficients. Let's assume each kernel evaluation takes at most $k$ cycles. The aforementioned sub-algorithm operators lead to many small matrix-vector multiplications. We assume that the pre-computation of M2L, M2M, and L2L matrices is done before the actual calculation (the entries of these matrices are independent of the particle positions). 
In the current black-box adaptive FMM implementation, one application of each operation to an octree node has the following cycle count:\\
\begin{tabular}{ll}
P2M & $C_1 + C_2 r^4$\\
M2M & $C_3 + C_4 r^6$\\
M2L & $C_5 + C_6 r^6$\\
P2L & $C_7 + C_8 k r^3$\\
L2L & $C_9 + C_{10} r^6$\\
L2P & $C_{11} + C_{12} r^4$\\
M2P & $C_{13} + C_{14} k r^3$\\
P2P & $C_{15}+ C_{16} k$\\ 
\end{tabular}\\
where, in the above expressions all $C_i$'s are constants that depend on machine architecture, particle distribution, subdividing threshold, etc. Note that this cost can be reduced using different methods. For example, fast Fourier transforms can be used to reduce the M2L operator to $O(r^3 \log r)$.

\subsection{Numerical results}

In this section, numerical results for a black-box adaptive FMM calculation for various particle distributions are presented. Here, we have used $r=4$ Chebyshev points in each direction (i.e., a total of 64 points) for the low-rank approximation. In Figure \ref{point_distribution}, the particle clustering for the uniform and spiral distribution is depicted. Note that the corresponding tree for the spiral case is not fully adaptive in the sense that it has only one concentration point as $N \rightarrow \infty$. In Figure \ref{cost}, the total FMM cost (in giga cycles) is plotted versus the number of points. It is clear that the adaptive FMM has linear complexity, as opposed to the direct matrix-vector multiplication that has ${\cal{O}}(N^2)$ complexity. We have assumed each kernel evaluation takes 19 cycles, each division or square root takes 4 cycles, and each addition, multiplication, subtraction, or branching is assumed to take 1 cycle. We count cycles as the code is running. Note that in this analysis we are not taking to the account the cost of memory access. Memory access optimization is highly architecture dependent.

\begin{figure}[!htbp] \centering
\includegraphics[width=0.4\textwidth]{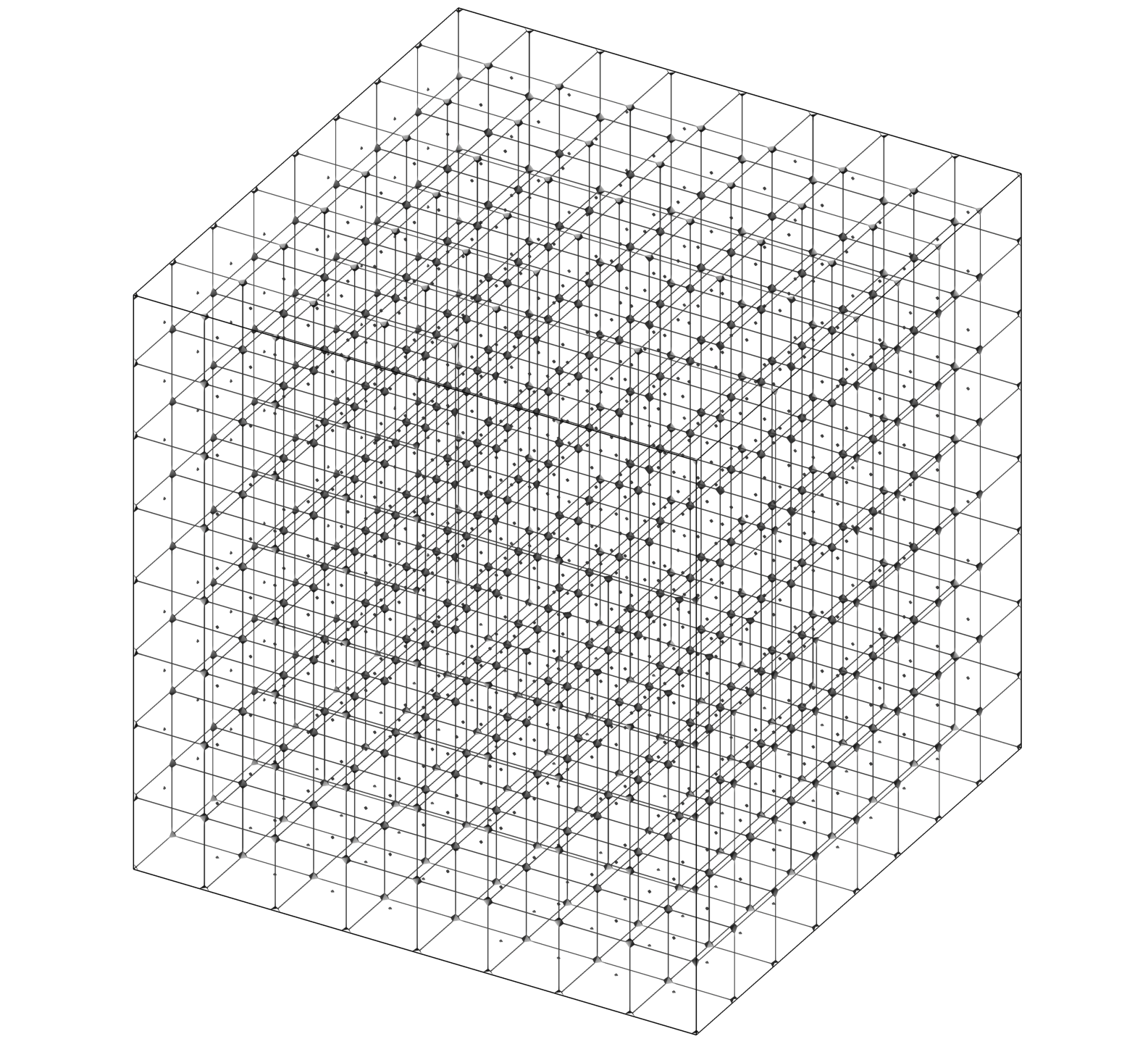}
\includegraphics[width=0.4\textwidth]{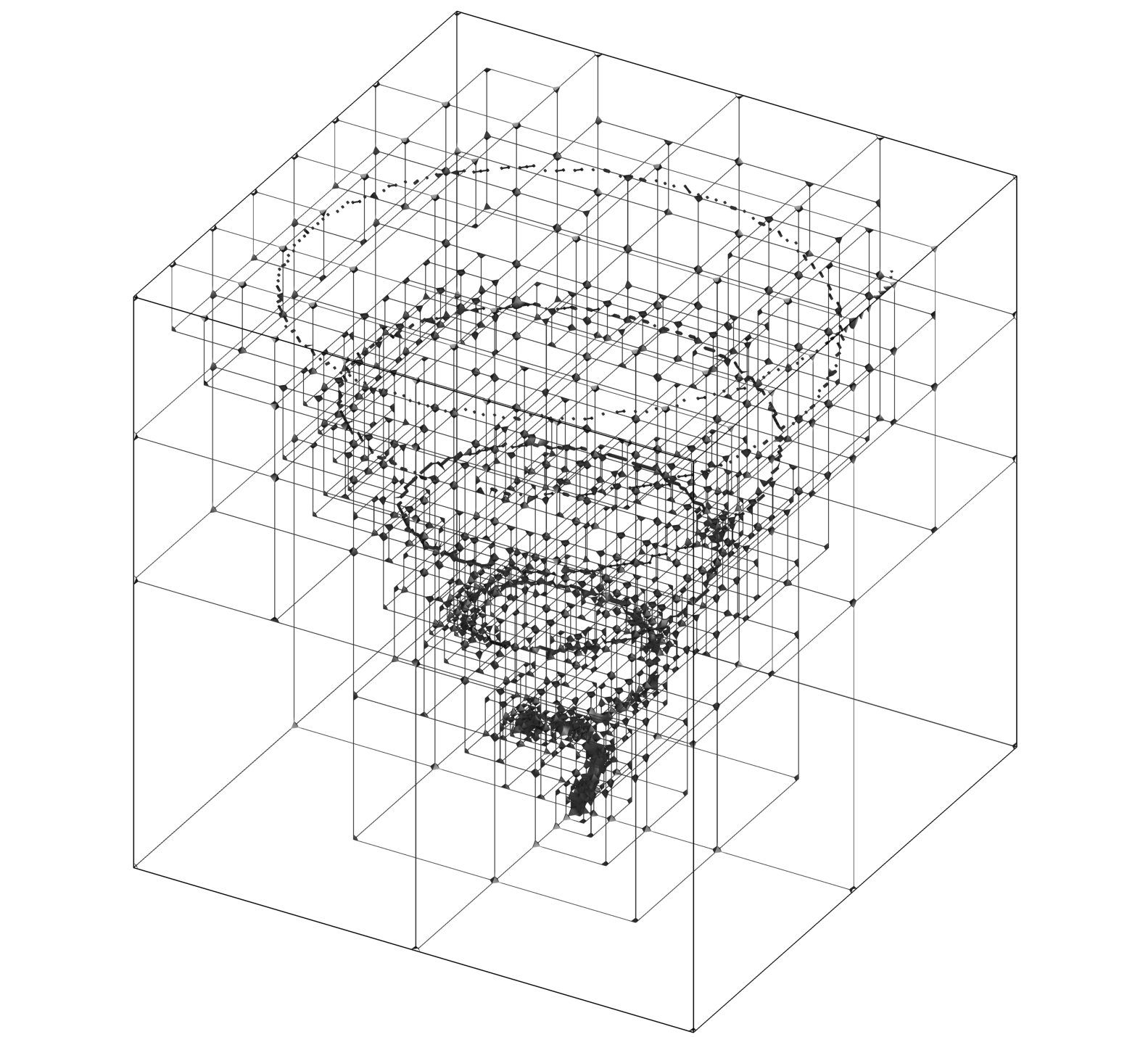}
\caption{Three dimensional adaptive FMM clusters of 1,000 particles with a subdividing threshold $t=10$, for two different point distributions. Left: uniform; Right: spiral.}
\label{point_distribution}
\end{figure}

\begin{figure}[!htbp] \centering
\includegraphics[width=0.45\textwidth]{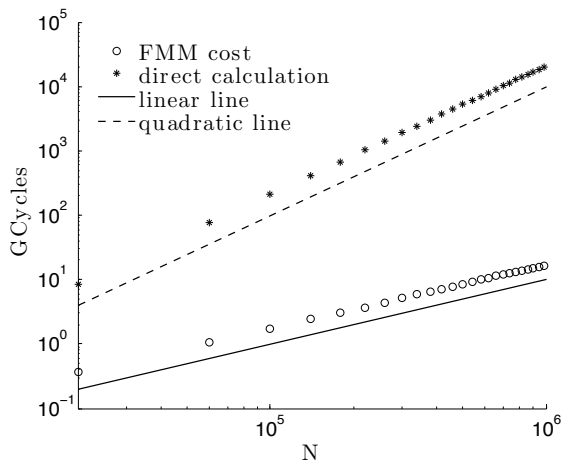}
\caption{Adaptive FMM total cost (in giga CPU cycles) as a function of the total number of particles, $N$, for the spiral case.}
\label{cost}
\end{figure}

The plot in Figure \ref{cost} is obtained after optimizing the subdividing threshold $t$. Basically, for each run, we have to tune $t$ to get the optimum cost. For a more comprehensive understanding of the optimization, in Figure \ref{thresh} we have illustrated the variation of the optimum threshold versus $N$ for two cases. The optimum threshold is not a unique number. In fact, there is an optimum interval rather than a single number. Any choice of $t$ within the optimum interval gives rise to the same adaptive tree with maximum height $l_{opt}$. This is the key parameter, which allows minimizing the computational cost. We will discuss this more in \S \ref{optimization}.

The extrema of the intervals in Figure \ref{thresh} lie along lines with different slopes. The ratio of slopes for consecutive lines is a constant number. This ratio depends on the distribution of particles. For instance, in the uniform case this ratio is $2^3$. However, for the spiral case it is $2^d$, where $d<3$. Each group of intervals in Figure \ref{thresh} share the same $l_{opt}$.

For each distribution, there are three key values related to the threshold: $t_{min}$, $t_{cut}$, and $t_{max}$, which are illustrated in the right plot in Figure \ref{thresh}. $t_{min}$ is the minimum acceptable threshold to get the optimum cost among different values of $N$. Similarly, $t_{max}$ is the maximum acceptable optimum threshold.  $t_{cut}$ belongs to the intersection of all of the optimum threshold intervals. $t_{cut}$ can also be interpreted as the maximum (over different values of $N$) of the beginning point of the optimum threshold intervals. The optimum threshold interval for each $l_{opt}$ starts from $t_{min}$, and ends at $t_{max}$. The optimum interval shifts as we increase $N$; however, as soon as $t_{cut}$ lies outside the optimum interval, $l_{opt}$ increases by 1, and another group of intervals forms. Similar to the slope of bounding lines, we have ${t_{max}}/{t_{cut}} = {t_{cut}}/{t_{min}}=2^d$.

This new quantity, $d$, determines the behavior of the optimal threshold interval as $N$ increases. For a uniform distribution of points $d$ is 3, and for a surface of points in space $d$ is 2. It can take various (and not necessarily integer) values for different point distributions. Therefore, we have to look after a generalized form of dimension that is not restricted to integer numbers to be able to categorize different non-uniform distributions. Parameter optimization is possible afterwards. This is discussed in the next sections.

\begin{figure}[!htbp] \centering
\includegraphics[width=.35\textwidth]{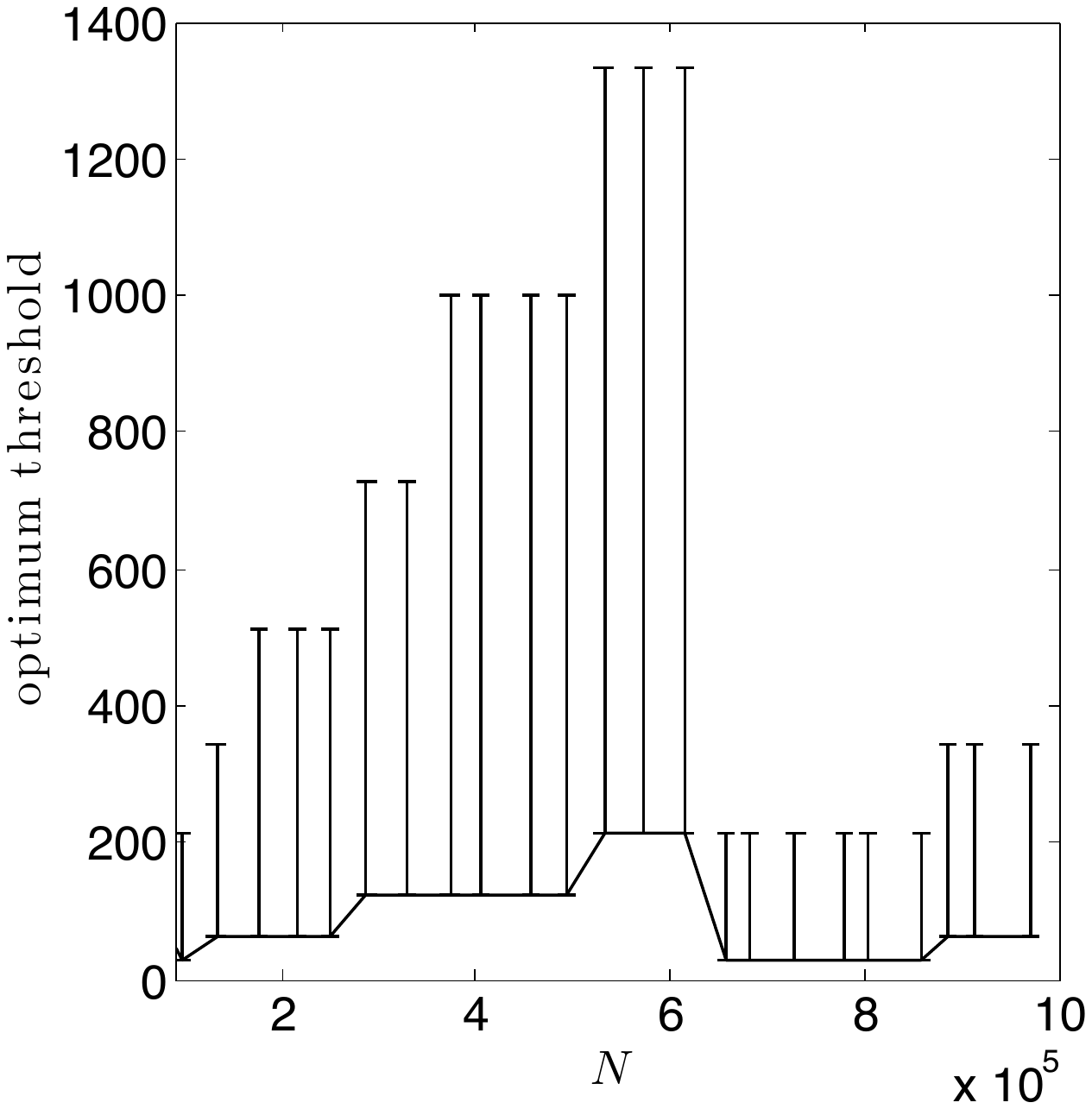}
\includegraphics[width=.37\textwidth]{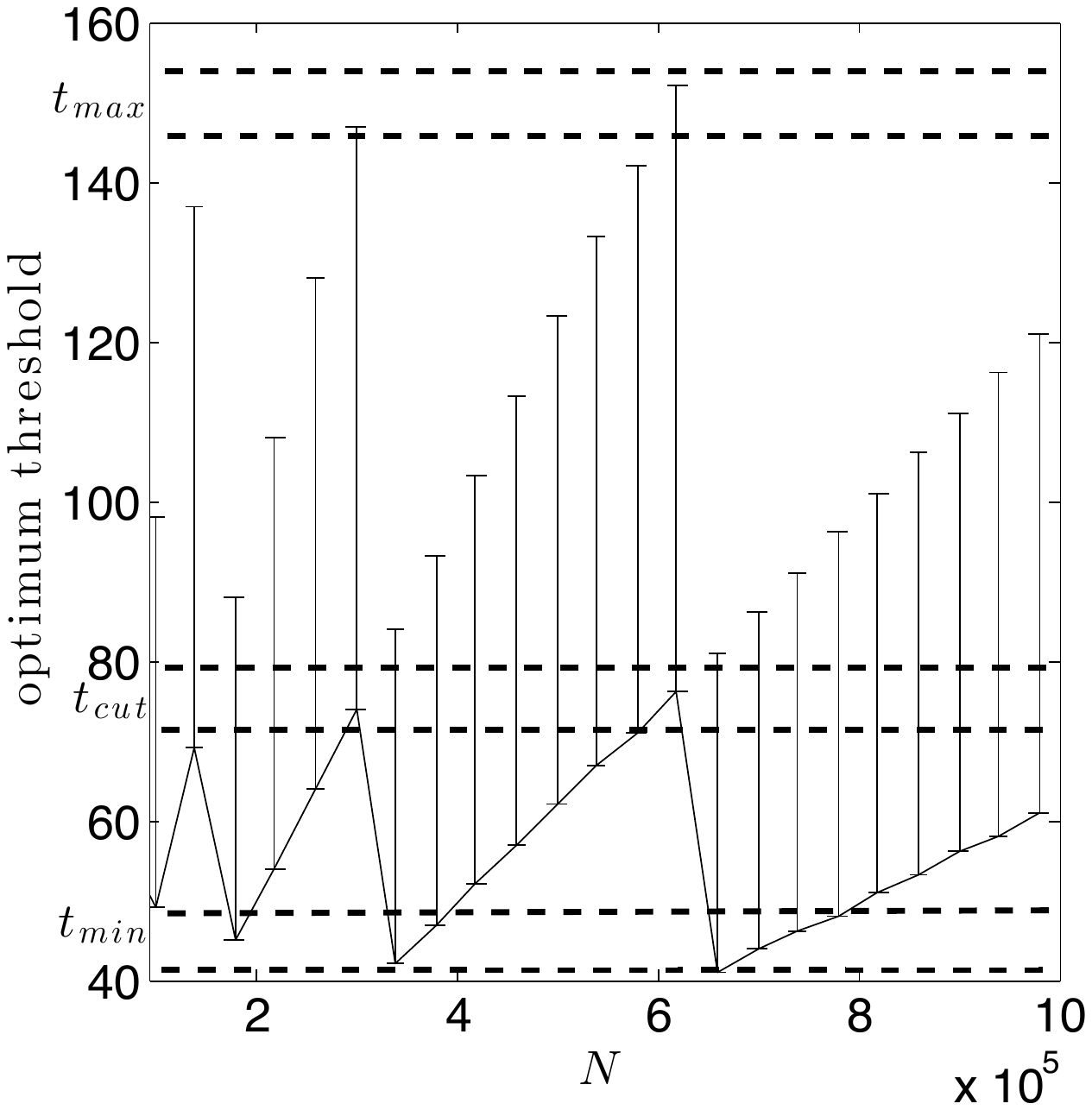}
\caption{Optimum subdividing threshold interval in the adaptive FMM as a function of $N$ for two particle distributions: Left: uniform; right: spiral.}
\label{thresh}
\end{figure}

\section {FMM and fractal sets} \label{fractal}

In \S \ref{numerical}, we illustrated how the study of different adaptive trees requires a generalized definition of dimension. Essentially, what governs the behavior observed in Figure \ref{thresh} is the number of \emph{occupied nodes} in level $l$ of the tree. For instance, for a full octree, there are always $2^{dl}$ occupied nodes in level $l$ of the tree, where $d=3$. In general, we are interested in the statistics of the number of occupied nodes in level $l$ of the octree. This is determined by the average number of children of a node. In order to formally define the average number of children of each node, we need to establish a strategy to increase the number of points (i.e., $N\to \infty$). As $N$ goes to $\infty$, its corresponding octree also grows. Therefore, we can look at the average number of children of nodes with depth $l$ as $l \to \infty$.  

There is a theoretical difficulty regarding taking the limit of a set of $N$ points as $N$ goes to infinity. If we simply consider a sequence of points $x_i$, and define the set $S_N = \{x_i\}_{i=1,\ldots,N}$, the sequence of sets $S_N$ can only converge to a set with a countable number of points. This won't work for our purpose. The Cantor set for example is uncountable. Therefore a different definition of the limit must be considered.

It turns out that it is easier and intuitively simpler to consider the limit of a sequence of octrees. To do this, we need to extend the definition of octree to trees with infinite levels as is defined below.

\begin{definition} (super octree)
A \emph{super octree} is a \emph{labeled} tree with infinite levels, where every node has exactly eight children. Each node is labeled as \emph{occupied} or \emph{empty}. A node is occupied only if its parent is occupied.
\end{definition}

A basic octree can be extended to a super octree (by infinitely subdividing) similar to the concept of extended tree introduced in \S \ref{proof}. Now, for a given super octree we can formally define the average number of children of nodes.

\begin{definition} (occupancy)
The \emph{occupancy} of a super octree $T$ is defined as:
\begin{equation}
\label{d_ocp}
d_{ocp}(T) \equiv \lim_{l \rightarrow \infty} \frac{\log_2(N_{ocp,l})}{l}
\end{equation}
where $N_{ocp,l}$ is the number of occupied nodes at level $l$.
\end{definition}

The above definition suggests that in level $l$ of the super octree, on average, $2^{d_{cop}l}$ out of $2^{dl}$ nodes are occupied. However, the above limit does not necessarily exist for a generic super octree. We are going to introduce a more general notion of dimension for super octrees that is always defined. To do this, we will associate a set of points in $\mathbb{R}^3$ to each super octree. The dimension of the super octree is then defined based on the set associated to it. In the following three definitions, the set of points associated with each super octree is defined step by step. 

\begin{definition} (path)
A \emph{path} ${\cal{P}}$ is an infinite sequence of nodes \{$N_i$\} starting from the root of the tree (i.e., $N_0$ is root), where $N_{i+1}$ is a child of $N_{i}$ for $i\ge0$. An \emph{occupied path} ${\cal{P}}$ is a path that only consists of occupied nodes.
\end{definition}


\begin{definition} (point associated to a path)
Let ${\cal{P}}$ be a path consisting of a sequence of nodes \{$N_i$\}, then this path is uniquely mapped to a point $\rho({\cal{P}}) \in [0, 1]^3 = \bigcap_{i} {\cal{I}}(N_i)$.
\end{definition}
Note that the intersection of nested closed sets is always non-empty. Since ${\cal{I}}(N_i)$'s are three-dimensional boxes with decreasing edge length $2^{-\delta(N_i)} = 2^{-i}$, their intersection is a single point.


\begin{definition} (associated set)
The \emph{associated set} of a super octree $T$ is defined as ${\cal{S}}(T)\subseteq [0, 1]^3= \bigcup_{{\cal{P}}_i} \rho({\cal{P}}_i)$, where ${\cal{P}}_i$ is an occupied path in $T$.

Conversely, for a given set of points $X \subseteq \mathbb{R}^3$ the \emph{associated super octree} $\mathcal{S}^{\dagger}$ can be created by performing the usual adaptive FMM subdividing process. $X$ should be first mapped to $[0, 1]^3$, and then the subdividing process is applied. However, some points may lie exactly on the midpoint of the interval in the subdividing process (e.g., points in $[0, 1]^3$ with a finite binary representation). To uniquely define the associated super octree, we can always pick the left (or always the right) interval when subdividing, in the case of midpoints. Hence, $\mathcal{S}$ is a \emph{pseudo inverse} for $\mathcal{S}^{\dagger}$ (i.e., $\mathcal{S}(\mathcal{S}^{\dagger}(X))=X$).
\end{definition}


Now, we can define the dimension of a generic super octree $T$ as the dimension of its associated set, ${\cal{S}}(T)$. But, what definition of dimension is the best choice? Let's begin with the \emph{Hausdorff dimension}, which is a generalized form of dimension. It is defined in two steps as follows \cite{dierk07}.

\begin{definition} (Hausdorff measure)
The $d$-dimensional \emph{Hausdorff measure}, $\mu_d$, of set of points $X$ for any $d$ in $R_0^+=[0, \infty)$ is defined as follows:
\begin{equation}
\label {H_measure}
\mu_d(X) = \lim_{\epsilon \rightarrow 0} \, \inf_{\mathcal{U}_{\epsilon}} \, \sum_{U_i \in \mathcal{U}_{\epsilon}} \left( \mbox{diam} \left( U_i \right) \right)^d,
\end{equation}
where the infimum is taken over all countable covers $\mathcal{U}_{\epsilon} = \{U_i\}_{i \in \mathbb{N}}$ of $X$ such that diam$(U_i)<\epsilon$ for all $i$. 
\end{definition}

\begin{lemma}
(Hausdorff dimension)
\label{Hausdorff}
For every bounded set $X$ in a given metric space, there is a unique value $d_H(X) \in R_0^+ \cup \{\infty \}$ such that $\mu_{d'} (X)=0$ if $d'>d_H(X)$ and $\mu_{d'} (X)=\infty$ if $d'<d_H(X)$. $d_H(X)$ is called the \emph{Hausdorff dimension} of $X$. 
\end{lemma}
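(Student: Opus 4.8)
The plan is to exhibit the claimed threshold value $d_H(X)$ as the supremum (equivalently infimum) of an appropriate subset of $R_0^+$ and then verify it has the required monotonicity properties. First I would establish the key monotonicity lemma for the Hausdorff measure: if $d' < d''$ and $\mu_{d'}(X) < \infty$, then $\mu_{d''}(X) = 0$; dually, if $\mu_{d''}(X) > 0$, then $\mu_{d'}(X) = \infty$. This follows from a simple scaling estimate inside the definition \eqref{H_measure}: for any $\epsilon$-cover $\mathcal{U}_\epsilon$ of $X$ we have
\begin{equation*}
\sum_{U_i \in \mathcal{U}_\epsilon} \bigl( \mathrm{diam}(U_i) \bigr)^{d''} \le \epsilon^{\,d'' - d'} \sum_{U_i \in \mathcal{U}_\epsilon} \bigl( \mathrm{diam}(U_i) \bigr)^{d'},
\end{equation*}
since $\mathrm{diam}(U_i) < \epsilon$ and $d'' - d' > 0$. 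Taking the infimum over covers and letting $\epsilon \to 0$, if $\mu_{d'}(X)$ is finite then the right-hand side tends to $0$, forcing $\mu_{d''}(X) = 0$. The dual statement is the contrapositive.

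Next I would define
\begin{equation*}
d_H(X) \equiv \sup \{ d' \in R_0^+ : \mu_{d'}(X) = \infty \},
\end{equation*}
with the convention that the supremum of the empty set is $0$ and that it is $+\infty$ if the set is unbounded above. Using the monotonicity lemma I would show that the set $\{d' : \mu_{d'}(X) = \infty\}$ is a (possibly empty, possibly unbounded) down-set: if $\mu_{d'}(X) = \infty$ and $0 \le d'' < d'$, then $\mu_{d''}(X) = \infty$ as well. Symmetrically, $\{d' : \mu_{d'}(X) = 0\}$ is an up-set. Then for $d' > d_H(X)$: by definition of the supremum there is some $d''$ with $d_H(X) \le d'' < d'$ and $\mu_{d''}(X) < \infty$ (if $\mu_{d''}(X) = \infty$ for all $d'' < d'$ we would get $d_H(X) \ge d'$, contradiction; more carefully one picks $d''$ strictly between $d_H(X)$ and $d'$, which is not in the down-set, hence has finite measure), and then the monotonicity lemma gives $\mu_{d'}(X) = 0$. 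Conversely, for $d' < d_H(X)$, there exists $d''$ with $d' < d'' $ and $\mu_{d''}(X) = \infty$ (again by the supremum definition, as $d''$ can be chosen in the down-set above $d'$), and monotonicity forces $\mu_{d'}(X) = \infty$. Uniqueness is immediate: any two values with the stated property must each be $\le$ the other, since if $d_1 < d_2$ both satisfied it, picking $d'$ strictly between them yields both $\mu_{d'}(X) = 0$ (from $d' > d_1$) and $\mu_{d'}(X) = \infty$ (from $d' < d_2$), a contradiction.

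The one genuine subtlety — and the step I expect to require the most care — is handling the borderline behavior and the edge cases cleanly. At $d' = d_H(X)$ itself the measure $\mu_{d_H(X)}(X)$ may be $0$, a positive finite number, or $\infty$, so the definition of $d_H(X)$ as a supremum over the $\infty$-set versus an infimum over the $0$-set must be reconciled; I would note that the two candidate definitions coincide precisely because there is no "gap" — the monotonicity lemma shows $R_0^+$ is partitioned (up to the single point $d_H(X)$) into an interval where $\mu = \infty$ and an interval where $\mu = 0$. The cases $d_H(X) = 0$ (the $\infty$-set is empty, e.g. a finite set of points, where $\mu_0(X)$ is the counting measure and $\mu_{d'}(X) = 0$ for all $d' > 0$) and $d_H(X) = \infty$ (the $\infty$-set is all of $R_0^+$) must be checked to satisfy the vacuous or degenerate forms of the statement. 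Boundedness of $X$ is used only to guarantee finite covers exist so that $\mu_{d'}$ is a well-defined element of $[0,\infty]$; for $d' = 0$ and $d'$ large this keeps the quantities from being ill-posed. I would close by remarking that this $d_H$ is exactly the Hausdorff dimension referenced in \cite{dierk07}, so the lemma is the standard well-definedness statement specialized to the bounded subsets of $\mathbb{R}^3$ relevant to the associated sets $\mathcal{S}(T)$.
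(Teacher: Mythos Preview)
Your argument is correct and is the standard textbook proof of the well-definedness of the Hausdorff dimension: the scaling inequality $\sum (\mathrm{diam}\,U_i)^{d''} \le \epsilon^{d''-d'} \sum (\mathrm{diam}\,U_i)^{d'}$ yields the monotonicity lemma, and then $d_H(X)$ is recovered as the supremum of the down-set $\{d' : \mu_{d'}(X)=\infty\}$, with uniqueness following from the trichotomy you describe.

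However, there is nothing to compare against: the paper does not supply a proof of this lemma at all. It is stated without proof as a standard fact from the fractal-geometry literature (implicitly deferring to the reference \cite{dierk07}), and the text immediately moves on to examples and to Lemma~\ref{H_measure_super_octree}. So your proposal is not an alternative to the paper's approach; it simply fills in a proof the authors chose to omit. Your closing remark that this is ``the standard well-definedness statement'' referenced in \cite{dierk07} is exactly right and is, in effect, all the paper itself says.
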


For example, the Hausdorff dimension of any countable set is 0, and the Hausdorff dimension of $\mathbb{R}^n$, where $n \in \mathbb{N}$, is $n$.

We will need the following lemma to prove Corollary~\ref{H_OCP}.

\begin{lemma} (Hausdorff measure for super octrees)
\label{H_measure_super_octree}
Let $T$ be a super octree. Then:
\begin{equation}
\mu_d({\cal{S}}(T)) = \lim_{l \rightarrow \infty} \, \inf_{{\cal{N}}_l} \, \sum_{N_i \in \mathcal{N}_l} 2^{-d \, \delta(N_i)}
\end{equation}
where the infimum is taken over all countable set of occupied nodes $\mathcal{N}_l = \{ N_i \}_{i \in \mathbb{N}}$ of $T$, such that $\delta(N_i) \ge l$ for all $i$, and $\{ {\cal{I}}(N_i) \}_{N_i \in \mathcal{N}_l}$ is a cover for ${\cal{S}}(T)$. Note that length of the edges of the cube ${\cal{I}}(N_i)$ is  $2^{-\delta(N_i)}$, where $\delta(N_i)$ denotes the depth of the node $N_i$ in $T$.
\end{lemma}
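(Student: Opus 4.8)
The plan is to show that the Hausdorff-measure formula for $\mathcal{S}(T)$ can be computed using only covers built from the dyadic cubes $\mathcal{I}(N_i)$ associated to occupied nodes of $T$. The key observation is that the dyadic cubes of a super octree form a particularly rigid covering family: at any fixed depth $l$ they tile $[0,1]^3$, and at mixed depths they are nested (either disjoint interiors, or one contains the other). I would prove the identity by establishing inequalities in both directions.

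\emph{The ``$\le$'' direction.} For each $l$, any family $\mathcal{N}_l$ of occupied nodes at depth $\ge l$ whose cubes cover $\mathcal{S}(T)$ is in particular a countable cover of $\mathcal{S}(T)$ by sets of diameter $\le \sqrt{3}\,2^{-l}$ (the diameter of a cube of edge $2^{-\delta(N_i)}$ is $\sqrt{3}\,2^{-\delta(N_i)}$). So, up to the harmless constant $(\sqrt3)^d$ — which I would absorb by noting that in the definition of $\mu_d$ one may equivalently use diameters or, say, edge lengths, since rescaling the gauge by a constant factor does not change the limit (it only rescales $\mu_d$, and for the purpose of a clean formula I would either carry the $(\sqrt3)^d$ factor through consistently or remark that the two normalizations agree up to this constant) — the right-hand side of the claimed identity is an infimum over a \emph{subclass} of admissible covers, hence is $\ge \mu_d(\mathcal{S}(T))$ in that normalization. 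I should be a little careful here: strictly, $\mu_d(\mathcal{S}(T)) \le \lim_l \inf_{\mathcal{N}_l}\sum 2^{-d\delta(N_i)}$ up to the constant, because restricting to dyadic-cube covers can only increase the infimum.

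\emph{The ``$\ge$'' direction} is the substantive step. Given an arbitrary countable cover $\mathcal{U}_\epsilon = \{U_i\}$ of $\mathcal{S}(T)$ with $\operatorname{diam}(U_i) < \epsilon$, I would replace each $U_i$ by dyadic cubes: pick $m_i$ with $2^{-m_i} \le \operatorname{diam}(U_i) < 2^{-m_i+1}$; then $U_i$ meets at most a bounded number (at most $4^3$, independent of $i$) of closed dyadic cubes of depth $m_i$, since such cubes have edge $2^{-m_i}$ and $U_i$ fits inside a cube of edge $2\operatorname{diam}(U_i)$. Discard any of these dyadic cubes that are \emph{empty} (contain no point of $\mathcal{S}(T)$, equivalently correspond to empty nodes), and replace the others by the corresponding occupied nodes $N$ at depth $m_i$. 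The resulting family of occupied nodes still covers $\mathcal{S}(T)$ (every point of $\mathcal{S}(T)$ lies in some $U_i$, hence in one of the retained occupied dyadic cubes), all have depth $\ge l := \lfloor \log_2(1/\epsilon)\rfloor$, and $\sum 2^{-d\,\delta(N)} \le 4^3 \sum_i 2^{-d m_i} \le 4^3 \sum_i (\operatorname{diam} U_i)^d$. Taking infimum over $\mathcal{U}_\epsilon$ and then $\epsilon \to 0$ gives the claimed lower bound, again modulo the universal constant $4^3$ (absorbed the same way as above, or noting that for the \emph{value} of the Hausdorff dimension — which is all Corollary~\ref{H_OCP} needs — such multiplicative constants are irrelevant).

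\emph{Main obstacle.} The only real subtlety is bookkeeping the constants ($(\sqrt3)^d$ from diameter-vs-edge, $4^3$ from the number of dyadic cubes meeting a small set) and deciding whether the stated identity is meant on the nose or up to such constants; I would either state it up to a constant or, if on the nose is intended, absorb $(\sqrt3)^d$ into a renormalized gauge and exhibit that the $4^3$ bound can be pushed to $1$ by instead choosing, for each $U_i$, the \emph{single} maximal dyadic cube contained in a fixed dilate of $U_i$ — but the clean route for the downstream corollary is to observe that whatever constant appears is independent of $l$ and of the cover, so it does not affect the limit that defines the dimension. A secondary point to check is that discarding empty dyadic cubes is legitimate: an empty node's cube contains no point of $\mathcal{S}(T)$ because, by definition of $\mathcal{S}(T)$, every point comes from an occupied path and hence lies in occupied cubes at every depth; this uses the super-octree rule that a node is occupied only if its parent is, so occupancy is consistent with the nesting of cubes.
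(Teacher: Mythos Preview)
The paper states this lemma without proof, so there is nothing to compare against directly. Your approach is the standard one for showing that the Hausdorff measure can be computed (up to a multiplicative constant) using dyadic net covers, and it is essentially correct: the ``$\le$'' direction is immediate since occupied-cube covers form a subclass of admissible covers, and for the ``$\ge$'' direction your replacement of each $U_i$ by the boundedly many level-$m_i$ dyadic cubes it meets, followed by discarding empty ones, is exactly the right move.

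Your caution about the constants is well placed and is in fact the only genuine issue here. As written, the paper's definition of $\mu_d$ uses $(\operatorname{diam}U_i)^d$, while the right-hand side of the lemma uses edge lengths $2^{-\delta(N_i)}$; these differ by the factor $(\sqrt{3})^d$, and the cube-packing argument introduces a further bounded factor. So the identity holds only up to a universal multiplicative constant depending on $d$, not on the nose, unless one adopts a normalization of $\mu_d$ based on edge length rather than diameter. You are right that this is immaterial for the downstream Corollary~\ref{H_OCP}, which only concerns whether $\mu_d$ is zero or infinite; the constant factor cannot change that. Your remark that discarding empty cubes is legitimate because every point of $\mathcal{S}(T)$ lies on an occupied path (hence in an occupied cube at every depth) is also correct and worth stating explicitly. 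In short: your argument is sound, and your ``main obstacle'' is not an obstacle to the corollary but a genuine imprecision in the lemma's statement that you have handled appropriately.
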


The next corollary shows that the Hausdorff dimension is always a lower bound for the occupancy (when the latter is defined).

\begin{corollary}
\label{H_OCP}
For any super octree $T$:
\begin{equation}
d_H({\cal{S}}(T)) \le d_{ocp} (T)
\end{equation}
assuming the existence of $d_{ocp} (T)$.
\end{corollary}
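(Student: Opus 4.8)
The plan is to reduce the statement to a vanishing-measure claim via the characterization of the Hausdorff dimension in Lemma~\ref{Hausdorff}. It suffices to show that $\mu_{d'}(\mathcal{S}(T)) = 0$ for every $d' > d_{ocp}(T)$: once that is established, Lemma~\ref{Hausdorff} forces $d_H(\mathcal{S}(T)) \le d'$ for all such $d'$, and letting $d'$ decrease to $d_{ocp}(T)$ (which is finite, being at most $3$ since $N_{ocp,l}\le 2^{3l}$) yields $d_H(\mathcal{S}(T)) \le d_{ocp}(T)$. So I fix $d' > d_{ocp}(T)$ and work with the formula of Lemma~\ref{H_measure_super_octree}.

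For each level $l$, I take as a candidate family $\mathcal{N}_l$ the set of \emph{all} occupied nodes of $T$ at depth exactly $l$. This set is finite (hence countable), every member has $\delta(N_i)=l\ge l$, and $\{\mathcal{I}(N_i)\}_{N_i\in\mathcal{N}_l}$ covers $\mathcal{S}(T)$: any $x\in\mathcal{S}(T)$ equals $\rho(\mathcal{P})$ for some occupied path $\mathcal{P}=\{N_i\}$, and then $x\in\mathcal{I}(N_l)$ with $N_l$ occupied and at depth $l$. Thus $\mathcal{N}_l$ is admissible in the infimum of Lemma~\ref{H_measure_super_octree}, and therefore
\begin{equation}
\inf_{\mathcal{N}_l}\ \sum_{N_i\in\mathcal{N}_l} 2^{-d'\delta(N_i)}\ \le\ \sum_{\substack{N\ \text{occupied}\\ \delta(N)=l}} 2^{-d'l}\ =\ N_{ocp,l}\,2^{-d'l}.
\end{equation}

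Now I insert an auxiliary exponent $d''$ with $d_{ocp}(T) < d'' < d'$. Because $\lim_{l\to\infty}\log_2(N_{ocp,l})/l = d_{ocp}(T) < d''$, there is an $l_0$ such that $N_{ocp,l} < 2^{d''l}$ for all $l\ge l_0$, whence $N_{ocp,l}\,2^{-d'l} < 2^{(d''-d')l}\to 0$ as $l\to\infty$ since $d''-d'<0$. Combining this with the previous display and passing to the limit in the formula of Lemma~\ref{H_measure_super_octree} gives $\mu_{d'}(\mathcal{S}(T)) \le 0$, hence $\mu_{d'}(\mathcal{S}(T))=0$ (the quantity is a limit of sums of non-negative terms). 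Lemma~\ref{Hausdorff} then gives $d_H(\mathcal{S}(T))\le d'$, and since $d'$ was arbitrary above $d_{ocp}(T)$ the corollary follows.

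The substantive work is already carried by Lemma~\ref{H_measure_super_octree}, which re-expresses the Hausdorff measure of $\mathcal{S}(T)$ through dyadic covers by octree cells; granting that, the corollary is a short $\epsilon$-type estimate. The only points needing a little care are verifying that the occupied nodes at a fixed level really do form an admissible cover, and interleaving the auxiliary exponent $d''$ so that the definition of $d_{ocp}$ is used as an inequality for large $l$ rather than only in the limit — I do not anticipate any genuine obstacle beyond that bookkeeping.
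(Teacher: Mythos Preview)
Your proposal is correct and follows essentially the same route as the paper: both use Lemma~\ref{H_measure_super_octree}, take as cover the occupied nodes at a fixed depth $l$, bound the infimum by $N_{ocp,l}\,2^{-d'l}$, and invoke Lemma~\ref{Hausdorff}. Your interleaved exponent $d''$ is a cleaner way to pass to the limit than the paper's direct substitution $2^{-d'l}\approx N_{ocp,l}^{-d'/d_{ocp}(T)}$, and it also lets you avoid the paper's separate treatment of the finite-$\mathcal{S}(T)$ case; otherwise the arguments are the same.
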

\begin{proof}
If ${\cal{S}}(T)$ is a finite set of points, $d_H({\cal{S}}(T)) = d_{ocp} (T)=0$. So, let's assume ${\cal{S}}(T)$ includes an infinite number of points. From the definition of infimum, for any level $l$:
$$
N_{ocp,l} \, 2^{-d \, l} = \sum_{N_{i,l}} 2^{-d \, l} \ge  \inf_{{\cal{N}}_l} \, \sum_{N_i \in \mathcal{N}_l}  2^{-d \, \delta(N_i)} 
$$
where $N_{i,l}$'s are all the occupied nodes of level $l$ of the tree. Using Lemma~\ref{H_measure_super_octree} :
\begin{align*}
& \Rightarrow \quad \lim_{l \rightarrow \infty} N_{ocp,l} \, 2^{-d \, l} \ge \lim_{l \rightarrow \infty} \,  \inf_{{\cal{N}}_l} \, \sum_{N_i \in \mathcal{N}_l}  2^{-d \, \delta(N_i)} = \mu_d \left( {\cal{S}} \left( T \right) \right) \\
& \Rightarrow \quad \lim_{l \rightarrow \infty} N_{ocp,l}  \, 2^{-d \, \frac{\log_2(N_{ocp,l})}{d_{ocp}(T)}} \ge \mu_d \left( {\cal{S}} \left( T \right) \right) \\
& \Rightarrow \quad  \lim_{l \rightarrow \infty} N_{ocp,l}^{ \left( 1-\frac{d}{d_{ocp} \left( T \right) } \right)} \ge \mu_d \left( {\cal{S}} \left( T \right) \right)
\end{align*}
Since $N_{ocp,l} \rightarrow \infty$ as $l \rightarrow \infty$, we can conclude:
$$
\mu_d \left( {\cal{S}} \left( T \right) \right) =0 \hspace{4mm} \mbox{for } d>d_{ocp}(T)
$$
Hence, based on Lemma \ref{Hausdorff}, $d_H \left( {\cal{S}} \left( T \right) \right) \le d_{ocp} (T)$. 
\qed
\end{proof}

Corollary \ref{H_OCP} provides a lower bound for our parameter of interest $d_{ocp}(T)$. What we described as occupancy is in fact equivalent to another notion of dimension called \emph{box counting dimension} or \emph{capacity}. The definition of capacity is similar to the Hausdorff dimension except that it only allows coverings with boxes of the same size. For \emph{uniformly self-similar} sets, the Hausdorff dimension and capacity are numerically equal (see~\cite{hutchinson81}). The numerical equality is also shown for a broader family of sets, namely multi-scale fractals (see~\cite{ronnie93}). However, for a general set, the Hausdorff dimension and capacity are not equal. In fact, there are many sets where capacity is not defined while the Hausdorff dimension is always defined \cite{dierk07}. The next three examples show different possible scenarios. The results below are given without proof.

\begin{example}
Let $X$ be the set of all rational points in $[0,1]$. Since $X$ is a countable set, the Hausdorff dimension is 0. However, since $X$ is dense, the super binary tree corresponding to $X$ is full (i.e., all nodes are occupied); therefore, its occupancy (or box counting dimension) is 1.
\end{example}

\begin{example}
\label{gen_Cantor_ex}
Let $X$ be the generalized Cantor set defined as follows. Start with segment $[0, 1]$, remove an \emph{open} segment with length $\gamma \in (0,1)$ from the middle. In the next step, remove the middle segments of the two new subintervals with length ratio $\gamma$, and continue. This is illustrated in Figure \ref{gen_Cantor}. The famous middle-third Cantor set corresponds to the case $\gamma=1/3$. Assume $X$ corresponds to a super octree $T$ (i.e., ${\cal{S}}(T)=X$). For the generalized Cantor set, the Hausdorff dimension and occupancy are equal:
\begin{equation}
\label{d_gen_cantor}
d_H(X)=d_{ocp}(T) = -\frac{\log(2)}{\log(\frac{1-\gamma}{2})}
\end{equation}
\end{example}

\begin{figure}[!htbp] \centering
\includegraphics[width=.35\textwidth]{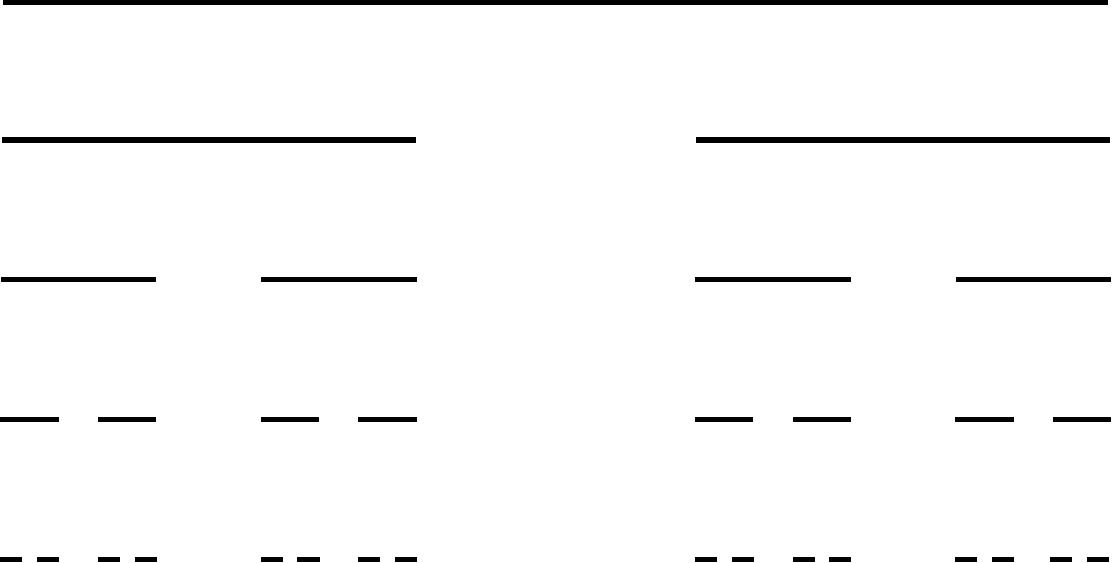}
\caption{Generalized Cantor set}
\label{gen_Cantor}
\end{figure}

\begin{example}
Let's reconstruct $X$ in the Example \ref{gen_Cantor_ex} with the sequence of scales $0 < \gamma_i < 1$. At step $i$, replace all intervals with two subintervals by removing a segment of length ratio $\gamma_i$. If $\gamma_0 = \gamma_1= \gamma_2= \ldots = \gamma$, the generalized Cantor set results. However, if the sequence $\{\gamma_i\}$ does not converge, the box-counting dimension is not defined, while the Hausdorff dimension is always well-defined \cite{dierk07}.
\end{example}

Now, we enter a key part in our formal definitions. The above framework enables us to analyze the dimension of super octrees associated to infinite number of points. In practice, we need to work with a finite set consisting of $N$ points, and study the point distribution effect as $N \to \infty$. If we consider a sequence of finite sets consisting $N$ points, $S_N$, as $N \to \infty$ we can only converge to a set with a countable number of points. Instead, we will consider a converging sequence of octrees. For example, consider the interval $(0,1)$. This is an uncountable set. The super octree for this set is full. We can consider the sequence of octrees $T_i$ obtained by simply keeping all the nodes in the super octree down to level $i$. As $i$ goes to infinity, we are correctly modeling $N \to \infty$ for a uniform distribution of points in the FMM. Taking the limit of a sequence of octrees is therefore the correct concept for our purpose.

The overall process is then as follows. We consider an infinite set of points, such as the Cantor set. Then, we consider the associated super octree $T$. We build a sequence of octrees $T_i$ that converges to $T$. For each $T_i$, we can consider an appropriate distribution of points such that the octree for that set is exactly $T_i$. As $i$ goes to infinity, the number of points in the FMM also goes to infinity. The convergence of the sequence of octrees is defined as follows.

\begin{definition} (converging sequence of octrees)
A sequence of octrees $\{ T_i \}$ converges to a super octree $T_s$, if for any $l>0$ there is $n_l>0$ such that if $i>n_l$, then $T_i$'s are identical from the root up to level $l$.
\end {definition}
\begin{remark}
If $c_l=\frac{\log_2 N_{ocp,l}}{l}$, where $N_{ocp,l}$ is the number of occupied nodes at level $l$ of octree $T_i$ with $i>n_l$, then $\lim_{l \rightarrow \infty} c_l = d_{ocp} (T_s)$ upon existence of $d_{ocp} (T_s)$.
\end{remark}

Note that the details of the point distribution is not important as long as the sequence of octrees is correct (i.e, converges to the desired super octree). This is the main data structure that determines the complexity of the FMM. For example, the location of a point in a leaf is irrelevant as far as the computational cost of the method goes. From there, depending on the dimension, we will be able to determine the optimal parameters for the FMM as well as discuss the running time of the algorithm.

For the purpose of the adaptive FMM study, we are looking for a set of points, $X$, leading to interesting FMM trees. The generalized Cantor set provides such properties for us. Generalized Cantor sets in $\mathbb{R}^n$ are \emph{uncountable}, \emph{perfect}, and \emph{nowhere dense}. Recall that a perfect set is a closed set with no isolated points, and a set that is nowhere dense is a set whose closure has an empty interior. The interpretation of these properties in the context of adaptive FMM are explained in Lemmas \ref{perfect} and \ref{nowheredense}. Moreover, the family of generalized Cantor set provides the full range of Hausdorff dimensions, from 0 to 3.

Let ${\cal{C}}_{\gamma}$ to be a one-dimensional generalized Cantor set with parameter $\gamma \in (0,1)$. Each point in this set can be described as follows:
$$
x \in {\cal{C_{\gamma}}} \iff x=\sum_{i=0}^{\infty} d_i b^i \hspace{4mm} \mbox{where } d_i \in \{0,a\} \mbox{ with } a=\frac{1+\gamma}{2} \mbox{ and } b=\frac{1-\gamma}{2}
$$
We can map ${\cal{C}}_{\gamma}$ to $[0, 1]$ using following surjection:
$$
f(x)=f(\sum_{i=0}^{\infty} d_i b^i)=\sum_{i=0}^{\infty} \frac{d_i}{a} 2^{-i}
$$
This mapping readily shows why Cantor set is uncountable, perfect, and nowhere dense.

\begin{remark}
An $n$-dimensional (topologically) generalized Cantor set can be obtained by $n$ times \emph{direct product} of a one-dimensional ${\cal{C}}_{\gamma}$ with itself. Therefore, for the three-dimensional case:
$$
{\cal{C}}_{\gamma} \times {\cal{C}}_{\gamma} \times {\cal{C}}_{\gamma} \subseteq [0, 1]^3 \hspace{4mm} \mbox{and} \hspace{4mm} d_H({\cal{C}}_{\gamma} \times {\cal{C}}_{\gamma} \times {\cal{C}}_{\gamma}) =-3\frac{\log(2)}{\log(\frac{1-\gamma}{2})}
$$
\end{remark}


\begin{lemma} (octree of a perfect set)
\label{perfect}
Let $X \subseteq [0, 1]^3$ be a perfect set. In the octree for the adaptive FMM on $X$, there is no occupied node with a finite number of particles in it.
\end{lemma}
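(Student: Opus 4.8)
The plan is to argue by contradiction: suppose some occupied node $M$ in the octree for $X$ contains only finitely many particles of $X$, i.e., $X \cap \mathcal{I}(M)$ is a finite nonempty set. I want to derive a contradiction with the hypothesis that $X$ is perfect (closed with no isolated points). First I would recall that, by Definition~\ref{def:I_M}, $\mathcal{I}(M)$ is a closed cube, and that $X \cap \mathcal{I}(M)$ is therefore a closed subset of the perfect set $X$. The key point is to locate at least one point of $X$ that lies in the \emph{interior} of $\mathcal{I}(M)$, or more carefully, to handle the points of $X \cap \mathcal{I}(M)$ directly and show each of them is isolated in $X$, contradicting perfectness.

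The main subtlety — and I expect this to be the principal obstacle — is that $\mathcal{I}(M)$ is closed, so a point $p \in X \cap \mathcal{I}(M)$ could in principle sit on the boundary $\partial\mathcal{I}(M)$ and have other points of $X$ arbitrarily close to it, but lying just \emph{outside} $\mathcal{I}(M)$ in an adjacent cube. So finiteness of $X \cap \mathcal{I}(M)$ alone does not immediately isolate $p$ within all of $X$. To get around this I would use the subdivision mechanism of the FMM tree: since $M$ is occupied and contains only finitely many particles, $M$ is either a leaf or, if it was subdivided, the process of subdividing eventually stops below $M$ (the particle counts are finite, hence eventually drop to at most the threshold). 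Pass to a descendant $M'$ of $M$ (possibly $M$ itself) that is an occupied leaf; then $X \cap \mathcal{I}(M')$ is still finite and nonempty. Now iterate the subdivision \emph{conceptually} past the leaf level (as in the extended-tree / super-octree construction): for any point $p \in X \cap \mathcal{I}(M')$, follow the occupied path through $p$; at deep enough level $\ell$ the cube $\mathcal{I}(N_\ell)$ containing $p$ has edge length $2^{-\ell}$ small enough that it contains no other point of the finite set $X \cap \mathcal{I}(M')$, and — crucially — because $X \cap \mathcal{I}(M') $ is finite we may also shrink an \emph{open} Euclidean ball $B(p,\varepsilon)$ around $p$ so small that $B(p,\varepsilon) \cap X \subseteq \mathcal{I}(M')$; here I would need to invoke that $X \setminus \mathcal{I}(M')$ is a closed set not containing $p$ (since $\mathcal{I}(M')$ together with the complementary leaf cubes partition the ambient box, and $X \setminus \mathcal{I}(M')$ is closed), so $p$ has positive distance to it. Combining the two, $B(p,\min(\varepsilon, \delta)) \cap X = \{p\}$, so $p$ is isolated in $X$.

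That contradicts the assumption that $X$ is perfect, completing the argument. The steps, in order: (1) assume an occupied node with finitely many particles; (2) descend to an occupied leaf $M'$ with $X \cap \mathcal{I}(M')$ finite and nonempty; (3) fix $p \in X \cap \mathcal{I}(M')$ and use finiteness to find a small cube around $p$ inside $\mathcal{I}(M')$ meeting $X$ only at $p$; (4) use closedness of $X \setminus \mathcal{I}(M')$ (equivalently, of the union of the other leaf cubes intersected with $X$) to rule out nearby points of $X$ outside $\mathcal{I}(M')$; (5) conclude $p$ is isolated, contradicting that $X$ is perfect. The boundary-point issue in step (4) is where the real care is required; everything else is routine. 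One could also streamline by noting that the cubes $\mathcal{I}(\cdot)$ at a fixed level have pairwise-disjoint interiors, so a point of $X$ on a shared face still belongs to only finitely many level-$\ell$ cubes (at most $8$ in $3$D), each of which one can check contains only $p$ from $X$ for $\ell$ large — this localizes $p$ without ever leaving finitely many cubes and avoids an explicit distance argument.
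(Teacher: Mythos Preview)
The paper states this lemma without proof, so there is no authorial argument to compare against. Your strategy is the natural one, and you correctly identify the boundary case as the only real issue. But your resolution of it does not work. The claim in step~(4) that $X \setminus \mathcal{I}(M')$ is closed is false: since $\mathcal{I}(M')$ is closed, $X \setminus \mathcal{I}(M') = X \cap \mathcal{I}(M')^{\,c}$ is the intersection of a closed set with an open set, which need not be closed. If $p \in \partial\mathcal{I}(M')$ and points of $X$ accumulate at $p$ from outside $\mathcal{I}(M')$, then $p$ lies in the closure of $X \setminus \mathcal{I}(M')$ without belonging to it, so no positive-distance argument is available. Your alternative at the end has the same defect: finiteness of $X \cap \mathcal{I}(M')$ tells you nothing about the cardinality of $X$ in the neighboring level-$\ell$ cubes across the shared face, so you cannot conclude that those cubes meet $X$ only in $\{p\}$.

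In fact the gap is not repairable without an extra hypothesis: with $\mathcal{I}(M)$ taken as a \emph{closed} cube (Definition~\ref{def:I_M}), the statement is false as written. In the one-dimensional analogue, $X = \tfrac12 + \tfrac12\,\mathcal{C}_{1/3}$ is perfect, yet $X \cap [0,\tfrac12] = \{\tfrac12\}$; under one of the two midpoint tie-breaking conventions the level-$1$ node $[0,\tfrac12]$ is then occupied with exactly one particle (reflect $X$ about $\tfrac12$ to produce the analogous example for the other convention). The lemma should therefore be read with the tacit assumption that the perfect set does not place points on dyadic cube faces---which holds for the generalized Cantor sets of Example~\ref{gen_Cantor_ex} at generic $\gamma$---in which case every occupied node contains a point of $X$ in its \emph{interior}, and your steps~(1)--(3) already finish the proof with no need for step~(4).
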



\begin{lemma} (octree of a nowhere dense set)
\label{nowheredense}
Let $X \subseteq [0, 1]^3$ be a nowhere dense set. In the octree for the adaptive FMM on $X$, there is no full subtree (i.e., a subtree whose nodes are all occupied).
\end{lemma}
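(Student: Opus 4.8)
The plan is to argue by contradiction: if the octree for $X$ contained a full subtree, then $X$ would be dense in a genuine cube, so $\overline{X}$ would have nonempty interior, contradicting the assumption that $X$ is nowhere dense. Concretely, suppose the subtree rooted at some node $M$ is full, i.e.\ $M$ and every descendant of $M$ is occupied.

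First I would isolate the only consequence of ``occupied'' that the argument uses. By Definition~\ref{def:I_M} each ${\cal I}(N)$ is a \emph{closed} cube, and any particle assigned to a node $N$ during the subdivision process lies in ${\cal I}(N)$; hence $N$ occupied implies ${\cal I}(N)\cap X\neq\emptyset$. This is precisely the direction that is insensitive to the midpoint tie-breaking convention used to define $\mathcal S^{\dagger}$, so no care about midpoints is needed; in particular I do not use (and do not need) the converse implication.

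Next I would show ${\cal I}(M)\subseteq\overline{X}$. Fix $p\in{\cal I}(M)$ and $\varepsilon>0$. For each $k\ge 0$ the $8^{k}$ descendants of $M$ at depth $\delta(M)+k$ have physical cubes of side length $2^{-\delta(M)-k}$ whose union is exactly ${\cal I}(M)$ (the subcubes tile the ancestor cube, overlapping only along shared faces). Choosing $k$ with $\sqrt{3}\,2^{-\delta(M)-k}<\varepsilon$ and picking a descendant $N$ at that depth with $p\in{\cal I}(N)$, occupancy of $N$ yields some $x\in{\cal I}(N)\cap X$, and then $\|x-p\|\le\operatorname{diam}{\cal I}(N)<\varepsilon$. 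Since $p\in{\cal I}(M)$ and $\varepsilon>0$ were arbitrary, ${\cal I}(M)\subseteq\overline{X}$. As ${\cal I}(M)$ is a nondegenerate cube, it has nonempty interior, whence $\overline{X}$ has nonempty interior, contradicting nowhere density. Therefore no full subtree can occur.

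I expect no serious obstacle here; the proof is short. The one point that has to be handled with a little care is to keep the combinatorial predicate ``occupied'' (which depends on the tie-breaking rule) separate from the purely topological fact ${\cal I}(N)\cap X\neq\emptyset$, and to remember that the level-$k$ subcubes \emph{cover} rather than partition the ancestor cube, so that boundary points of ${\cal I}(M)$ are also captured at every scale. An equivalent repackaging in the language of \S\ref{fractal} is that a full subtree rooted at $M$ makes every path through $M$ an occupied path, forcing ${\cal I}(M)\subseteq\overline{{\cal S}(T)}$; but the direct $\varepsilon$-argument above is cleaner and sidesteps any question of whether $X$ is closed.
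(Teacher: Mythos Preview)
Your argument is correct. The paper states Lemma~\ref{nowheredense} without proof, so there is nothing to compare against; your contradiction via ${\cal I}(M)\subseteq\overline{X}$ is exactly the natural route, and the care you take to use only the implication ``$N$ occupied $\Rightarrow {\cal I}(N)\cap X\neq\emptyset$'' (which is independent of the midpoint convention) is appropriate.
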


Before moving on to the next section, and study the role of $d_H$ in the performance of the adaptive FMM, let's present one dramatic example to demonstrate why the \emph{modified adaptive FMM algorithm} introduced in \S \ref{proof} is necessary. 
\begin{example}
\label{crazy_dist}
Define $X \subseteq [0, 1]$ as follows. Start with $[0, 1]$. At step $i$ consider the current set of intervals. Take each interval and subdivide it into two subintervals. Take each interval and reduce its length by $2^{2^i}$, that is, if the interval starts from $a$ and has length $l$, the scaled interval starts from $a$, and has length $\frac{l}{2^{2^i}}$. The intersection of all of these intervals is $X$. Basically:
\begin{gather*}
x \in X \iff x\mbox{'s binary form is } x=0.d_1 d_2 d_3 d_4 \dots \\
\mbox{ where } d_1\in \{0,1\}, d_2=d_3=0, d_4\in \{0,1\}, d_5=d_6=d_7=d_8=0,  d_9\in \{0,1\}, \ldots
\end{gather*}
We can show that $X$ is uncountable, perfect, and nowhere dense.

Now, the arbitrarily long sequences of $0$'s guarantees the existence of arbitrarily long branches of singleton nodes in the associated adaptive binary tree. Hence, the modified adaptive FMM algorithm is necessary if one wants to get linear complexity for such a set. Indeed, if we consider the intervals created at step $i$, we have $N=2^i$ such intervals (assume that we create a set with $2^i$ points by picking a random point in each of the intervals obtained at step $i$). But, we have $\Omega(N)$ singleton branches, each of size $\Omega(N)$. Therefore, without trimming the singleton branches, the overall cost of the FMM is $\Omega(N^2)$ in that example.
\end{example}


\section {Optimal choice of the FMM parameters} \label{optimization}

In this section, we are going to use the generalized Cantor set introduced in \S \ref{fractal}, to study the computational cost of the FMM. The same implementation as described in \S \ref{numerical} is used. 

In the previous section, we explained how to construct a sequence of octrees that is converging to the desired super octree. Here, we consider the super octree associated with the generalized Cantor set ${\cal{C}_{\gamma}}$. To construct a finite set of points, we truncate the number of levels to a finite number, and pick one point in each leaf node of the resulting tree. For example, in Figure \ref{gen_Cantor}, we can pick 16 points from intervals resulted at step 4. Figure \ref{Cantor_4096} shows the clusters of the octree corresponding to a subset of ${\mathcal{C}_2}$ with 4096 points.

Earlier in the paper, we discussed the subdividing threshold $t$. This number can take any value between $1$ and $N$. The former results in a calculation with highly clustered particles, whereas the latter brings about direct $\mathcal{O}(N^2)$ computation. Its optimum value depends on many  parameters such as machine architecture, point distribution, low rank approximation order, etc. In \cite{zorin03}, the optimum threshold is picked based on the low rank approximation order. Chandramowlishwaran et al.~\cite{aparna10} showed how the total cost varies with the choice of threshold, and picked the optimum value by tuning. Gumerov et al.~\cite{gumerov03} discussed the analytical optimal value of the subdividing threshold for the uniform distribution of particles. 

We will assume a constant low-rank approximation order (the number of Chebyshev points in our case), and study the variation of the total cost, and the optimum threshold as a function of the Hausdorff dimension for the generalized Cantor sets. 

\begin{figure}[!htbp] 
\centering
\includegraphics[width=.8\textwidth]{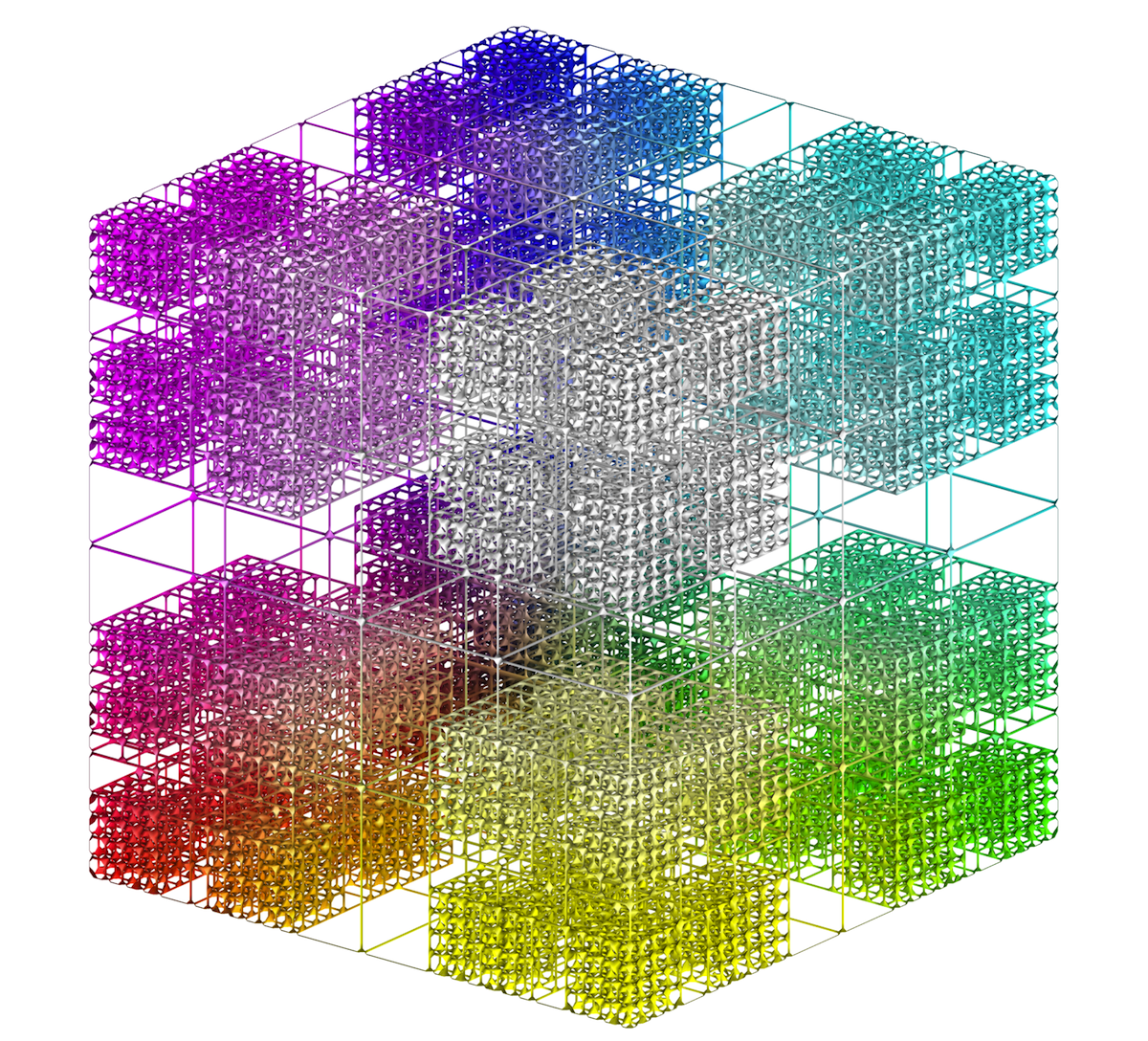}
\caption{Three dimensional Cantor set FMM tree, for $N=4096$, $t=1$, and $d_H=2$. For visualization purpose it is colored by [R,G,B]=$[x,y,z]$.}
\label{Cantor_4096}
\end{figure}

\subsection{New double-threshold method} \label{new_method}

The subdivision threshold $t$ is not the only important parameter for optimizing the computational cost in FMM. There are in fact two separate criteria that can be used. One is $t$, and the other is the maximum number of levels (i.e., the maximum depth of the tree). In the literature, the maximum number of levels is obtained simply by fixing $t$, and then subdividing all nodes until all leaf nodes have no more than $t$ points. For a non-adaptive FMM, where all of the leaf nodes have the same depth, it is possible to find one optimal subdividing threshold $t$ \cite{gumerov03}. However, for the adaptive case this cannot be optimal. For example, in Figure \ref{1dCantor} a one-dimensional general Cantor set is partially illustrated. Unlike the uniform case, the leaf nodes have different depths, and are sparsely distributed. 

Let's consider all the operators involved in the FMM. Subdividing a node results in the following. If the node is at level $\lmax$, we increase M2L, M2M and L2L while reducing P2P. If the node is at a higher level in the tree, we also increase M2L, M2M and L2L, but then reduce the cost of the P2L and M2P operators. The cost of the P2L/M2P operator is not the same as P2P. Therefore, we should expect that the optimal threshold at $\lmax$ is different from the optimal threshold higher up in the tree.

\begin{figure}[!htbp] 
\centering
\includegraphics[width=.8\textwidth]{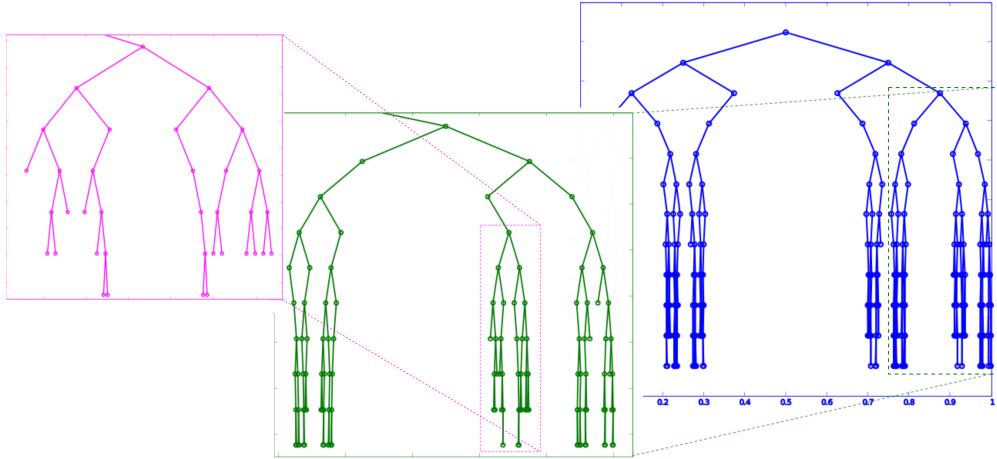}
\caption{One-dimensional general Cantor set with $\gamma = 0.4$. The figure on the right shows the entire tree. The middle figure in green is a zoomed-in copy of the right side of the tree. The left figure in magenta is zoomed-in again, showing the ``second main branch'' in the green tree. This figure shows the adaptive nature of the FMM tree for a fractal set.}
\label{1dCantor}
\end{figure}

The key point is that the M2L operator, depending on the implementation, is relatively cheap compared to P2P. The M2L operator is precomputed, whereas in P2P calculation we need kernel evaluation. M2L can also be accelerated (not done in this paper though) using a singular value decomposition (in a general case), or a fast Fourier transform using uniformly distributed points instead of Chebyshev points.\footnote{There are stability and accuracy issues with this approach. These can be successfully solved but we won't discuss this point in this paper.} As a result, the optimal threshold at $\lmax$ must be small, meaning that it is smaller than the number of Chebyshev nodes used for the multipoles and locals.

In an M2P operator, we need to compute the interaction between particles and Chebyshev points. Therefore, we expect M2P (resp.\ P2L) to be more expensive than P2P for the same threshold. The threshold at $l<\lmax$ should consequently be lower than the threshold at $\lmax$.

With this insight, we decided to investigate the following subdivision rule. We choose $\lmax$ and $t$, and then we:
$$
\text{subdivide a node iff: }~~~
\text{level of node} < \lmax \quad \text{and} \quad \text{number of points in node} > t
$$
This condition implies that it is possible for leaf nodes to have more than $t$ points. However, all clusters that are not at the level $\lmax$ must have less than $t$ points. Therefore, we have two parameters to tune in this approach. This is the new method we investigated. Given $\lmax$, we define $s(\lmax)$ to be the maximum number of points per leaf node. In Figure~\ref{sl}, for the class of generalized Cantor sets, $s(\lmax)$ is plotted. In this case, $s(\lmax)$ can be approximated by $N/2^{ld_H}$, where $d_H$ is the Hausdorff dimension of the general Cantor set.

\begin{figure}[htbp] \centering
\includegraphics[width=.5\textwidth]{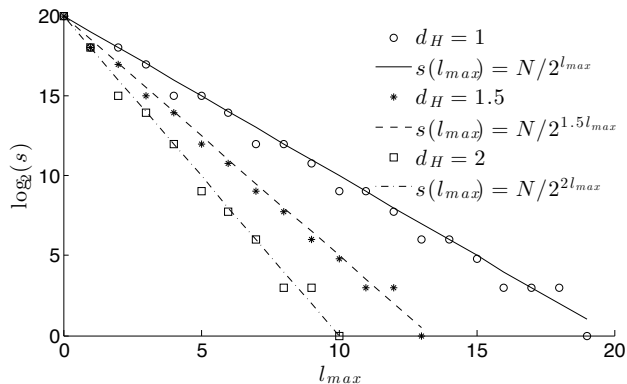}
\caption{$s(\lmax)$ (the maximum number of particles per leaf in a tree with at most $\lmax$ levels) as a function of $\lmax$ for different $d_H$ and $N=10^6$.}
\label{sl}
\end{figure}

In our approach we have two parameters to tune: $t$ and $\lmax$. The goal is to pick parameters such that the total cost of the calculation, $\cost(\lmax,t)$, gets its optimal value. In the conventional subdividing scheme, the optimization is restricted only to the parameter pairs $(\lmax,s(\lmax))$. 

For the family of general Cantor sets, we investigated the total FMM cost for different pairs of parameters $(\lmax,t)$. Decreasing the value of $t$, while $\lmax$ is constant, is similar to the concept of extended tree introduced in \S \ref{proof}. Essentially, by decreasing $t$ we are substituting M2P and P2L operations, which are fairly expensive, by new M2L operations that are cheaper. It turns out that the total cost changes slightly at small values of $t$ for different $\lmax$. Therefore, settling on $t=1$, and search for the optimal $\lmax$ is a simple near-optimal choice. 

In Figure~\ref{cost_t}, the total cost as a function of $t$ for $d_H=1$ and $d_H=2$ is shown. To obtain these plots, at first, with $t=1$ we found the optimal maximum depth $\lmax$. Then for the optimal $\lmax$, we changed the subdividing threshold from $t=1$ to $t=s(\lmax)$. We see that the choice of threshold $t=1$ is very close to the optimum. This is the case for different values of $d_H$. In this figure, we observe the rapid increase in M2P and P2L as $t$ increases. This agrees with our analysis that small values of $t$ are preferable.

\begin{figure}[htbp] \centering
\includegraphics[width=.4\textwidth]{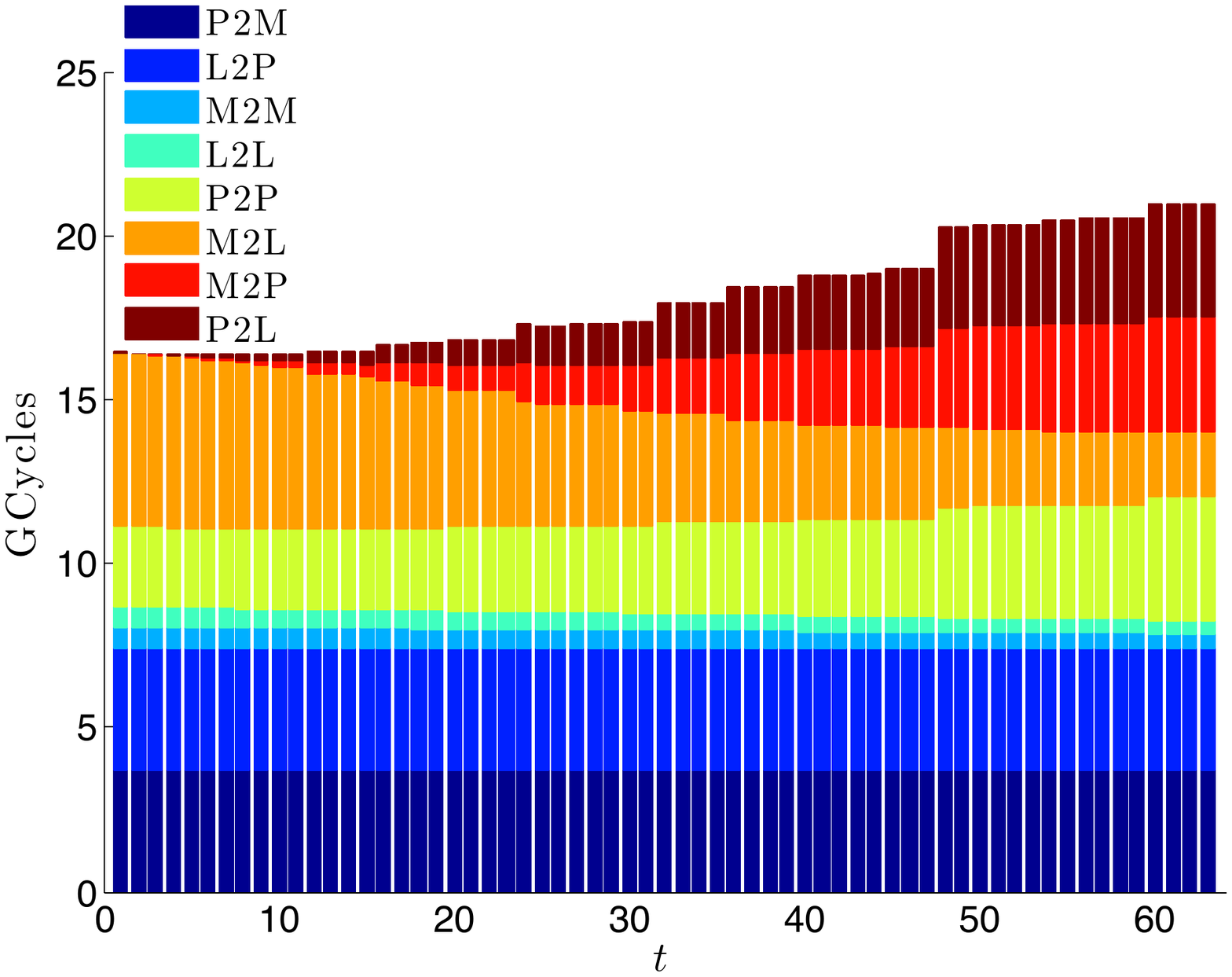}
\includegraphics[width=.4\textwidth]{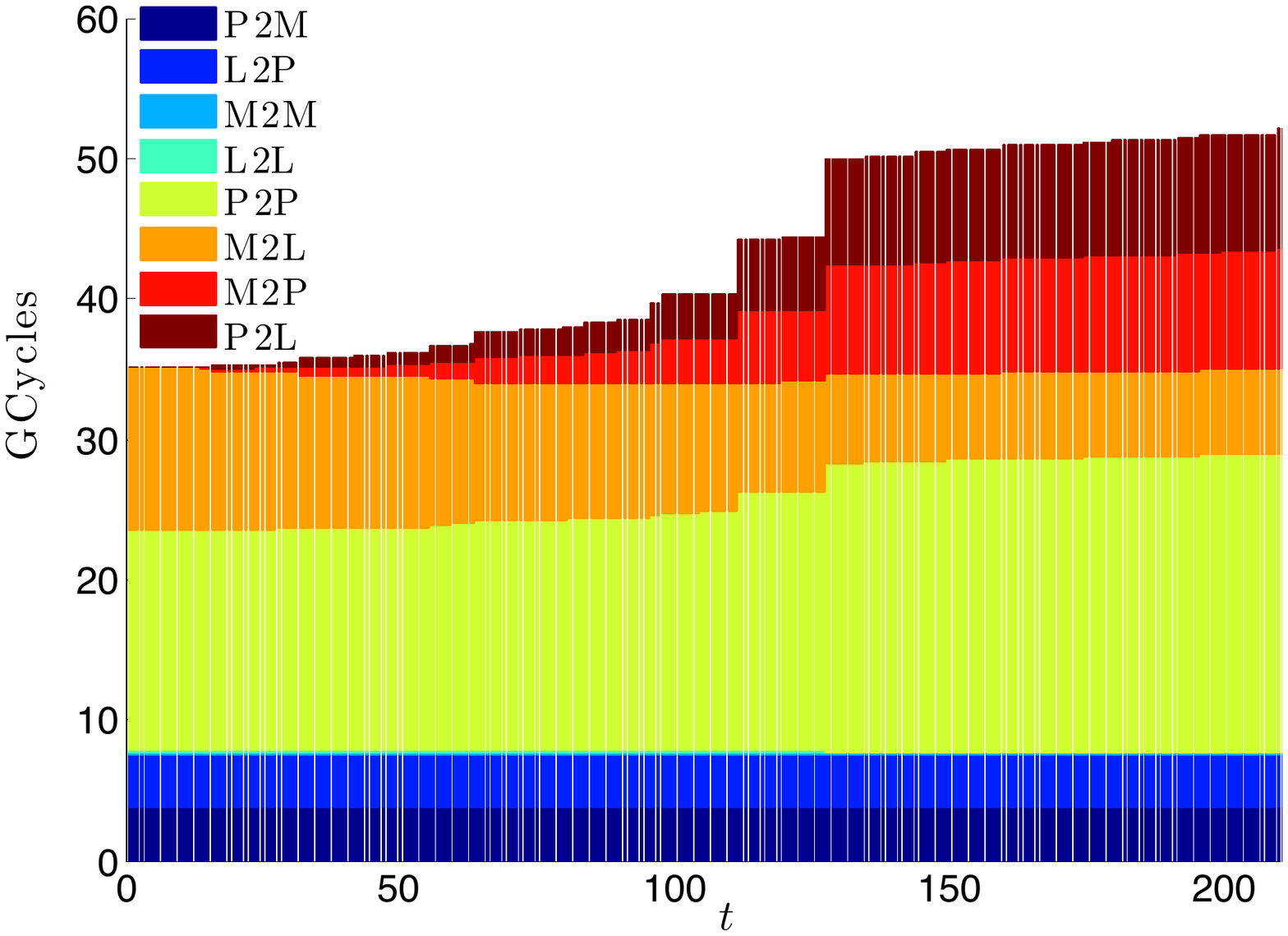}
\caption{The adaptive FMM total cost for $N=10^6$ particles with general Cantor set distributions. We picked $l_{opt}$ using the new subdividing method. Each sub-algorithm cost is shown as $t$ increases from 1 to $s(\lmax)$. Left: $d_H=1$. Right: $d_H=2$. This figure shows that small values of $t$ are preferable. This is caused by the rapid increase in the cost of M2P and P2L.}
\label{cost_t}
\end{figure}

In Figure~\ref{comparison}, we have plotted the optimal total cost as a function of $d_H$ for the conventional and new subdividing schemes. Note that the total cost function takes its optimal value at different $\lmax$'s for each scheme. Observe that the new and the conventional threshold schemes become the same as $d_H \rightarrow 3$, since in that case all leaves are lying in the finest level. 

\begin{figure}[htbp] 
\centering
\includegraphics[width=.4\textwidth]{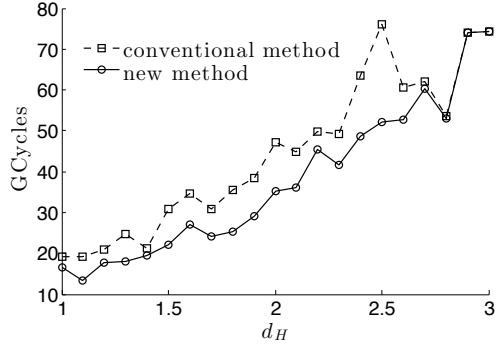}
\caption{Comparison between the optimal cost of the conventional subdividing method and the new method. We note that a greater improvement would be obtained with our new method if we had implemented an optimized M2L operator. For $d_H=3$, both methods become identical since all leaf nodes are at level $\lmax$.}
\label{comparison}
\end{figure}

From Figure \ref{cost_t}, we see that for $t=1$, the major part of the total cost is due to M2L, P2P, P2M, and L2P operators. Since $N$ and $r$ are fixed, the P2M and L2P costs are fixed. In the next part, we have introduced an {\it ad hoc} analysis to find the relation between the computational cost and $d_H$, keeping everything else fixed (i.e., $N$, $r$, the computer hardware, etc).

\subsection {Heuristic optimization analysis} \label{heuristic}

In \S \ref{new_method}, we showed that the M2L and P2P operators are the key operators that determine the total cost, while other operators have either constant or small cost. In this section by total cost we refer to the sum of the cost of P2P and M2L operations. 

Let's begin with the following assumptions using the definition of \emph{occupancy} (which happens to be the same as Hausdorff dimension for the family of fractal sets that we are studying). Assume that the maximum depth of the tree is $l$.
\begin{itemize}
\item Number of occupied nodes with depth $i\simeq 2^{d_H i}$
\item Number of leaves $\simeq 2^{d_H l}$
\item Number of particles per leaf $\simeq N/2^{d_H l}$
\item Number of P2P interactions  per leaf $\simeq 3^{d_H}$
\item Number of M2L interactions per node $\simeq 6^{d_H} - 3^{d_H}$
\item Total number of occupied nodes in the tree $\simeq (2^{d_H})^0+(2^{d_H})^1+(2^{d_H})^2+\ldots+(2^{d_H})^l = \frac{(2^{d_H})^{l+1} -1}{2^{d_H}-1}$
\end{itemize}
Based on the above assumptions, the P2P+M2L cost is:
\begin{equation}
\label{tot_COST}
\cost\simeq \alpha 2^{d_H l}  (\frac{N}{2^{d_H l}})^2  3^{d_H} + \beta \frac{2^{d_H l +d_H}-1}{2^{d_H}-1}  (6^{d_H} - 3^{d_H})
\end{equation}
where, $\alpha$ and $\beta$ depend on other parameters, such as machine architecture, low rank approximation, etc. We are going to optimize the above function with respect to $l$. Set $\frac{\partial}{\partial l} \cost =0$:
\begin{equation}
\label{l_opt}
l_{opt} \simeq \frac{\log_2 (\frac{\alpha}{\beta})}{2d_H}+\frac{\log_2 N}{d_H} - \frac{1}{2} 
\end{equation}
Plug $l_{opt}$ in Equation (\ref{tot_COST}) to obtain $\cost_{opt}$:
\begin{equation}
\label{opt_cost}
\cost_{opt} \simeq 3^{d_H} \left( 2\sqrt{\alpha \beta} N 2^{d_H/2} -\beta \right)
\end{equation}
Note that for $l=l_{opt}$ we have $\cost_{\mbox{P2P}} \simeq \cost_{\mbox{M2L}}$. For large values of $N$, we can ignore the second term in the Equation (\ref{opt_cost}), and rewrite it as: $\cost_{opt} \sim 2^{d_H/2} 3^{d_H} N$, which is consistent with our linear complexity expectation. From this heuristic analysis, we can conclude the following models.

{\bf Model of the optimum tree height:} The optimum height of the tree in the adaptive FMM with fractal sets is of the form $$l_{opt} \simeq \frac{K_1+\log_2 (N)}{d_H} +K_2,$$ where $K_1$ and $K_2$ are functions of other parameters.

{\bf Model of the optimum cost:} The optimum P2P+M2L cost in the adaptive FMM with fractal sets is of the form $$\log (\cost_{opt}) \simeq K_3+\log (N) +  1.445 \; d_H,$$ where $K_3$ depends on the other parameters.

In Figure~\ref{verification}, we have verified the above models, for generalized Cantor sets with Hausdorff dimension varying from $d_H=1$ to $d_H=3$. The predicted behavior was obtained with good accuracy. The least square line in the right plot of Figure \ref{verification} has slope of 1.2 which is close to the analytical prediction $\log(3) + \log(2)/2=1.44$ (this gets better for larger values of $N$). Also this plot shows that the worst case in the adaptive FMM is when the tree is full. It is consistent with the analysis presented in \S \ref{proof}, where we introduced linear complexity for adaptive FMM.

In \S \ref{fractal} we showed that the average number of children of a node is what determines the optimal parameters and optimal cost. This can be formalized by the Hausdorff dimension. Note that even with a finite number of points, the concept of average number of child nodes is well-defined, and can be used to tune the FMM using the above models.

\begin{figure}[!htbp] 
\centering
\includegraphics[width=.4\textwidth]{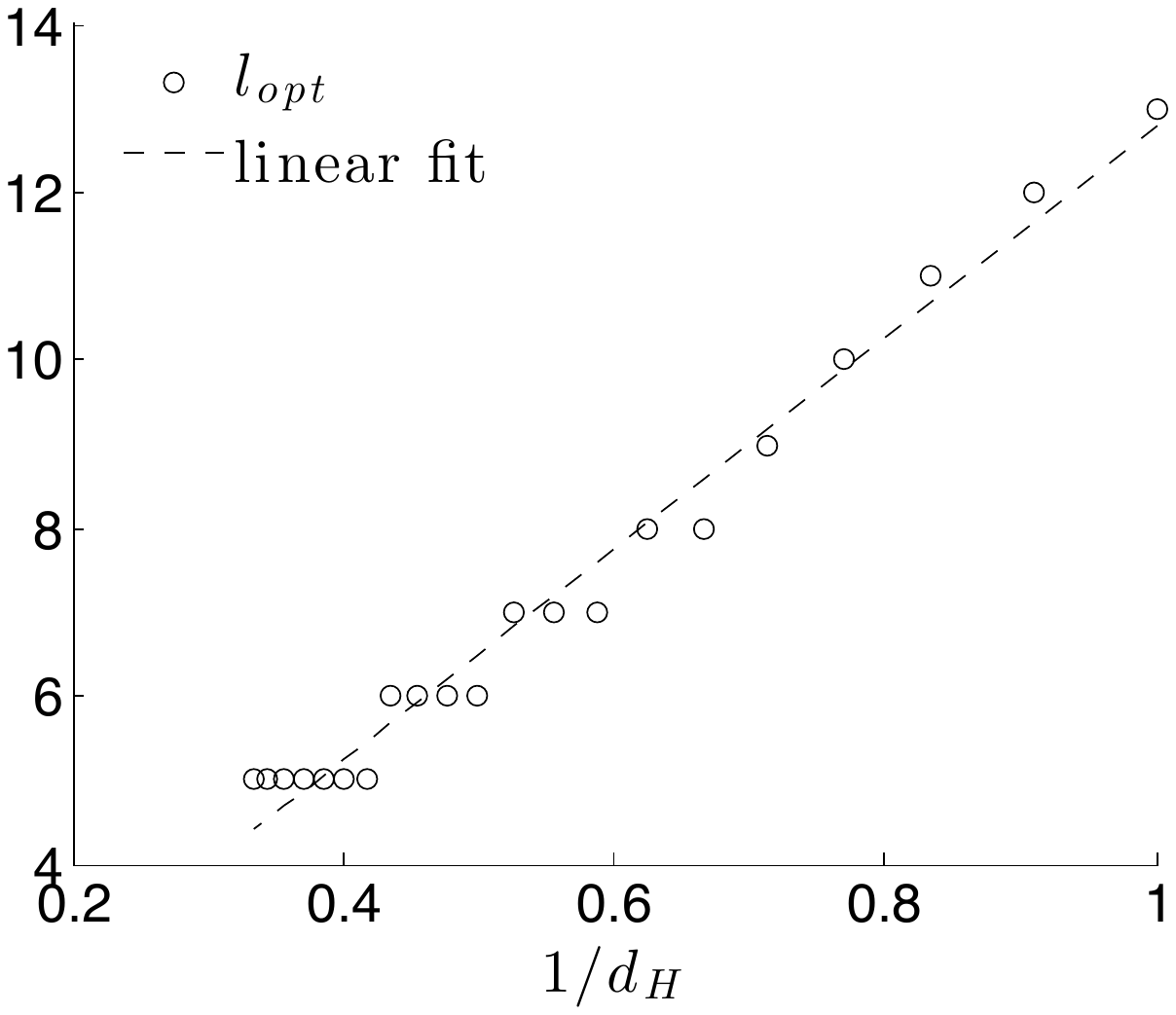}
\quad
\includegraphics[width=.4\textwidth]{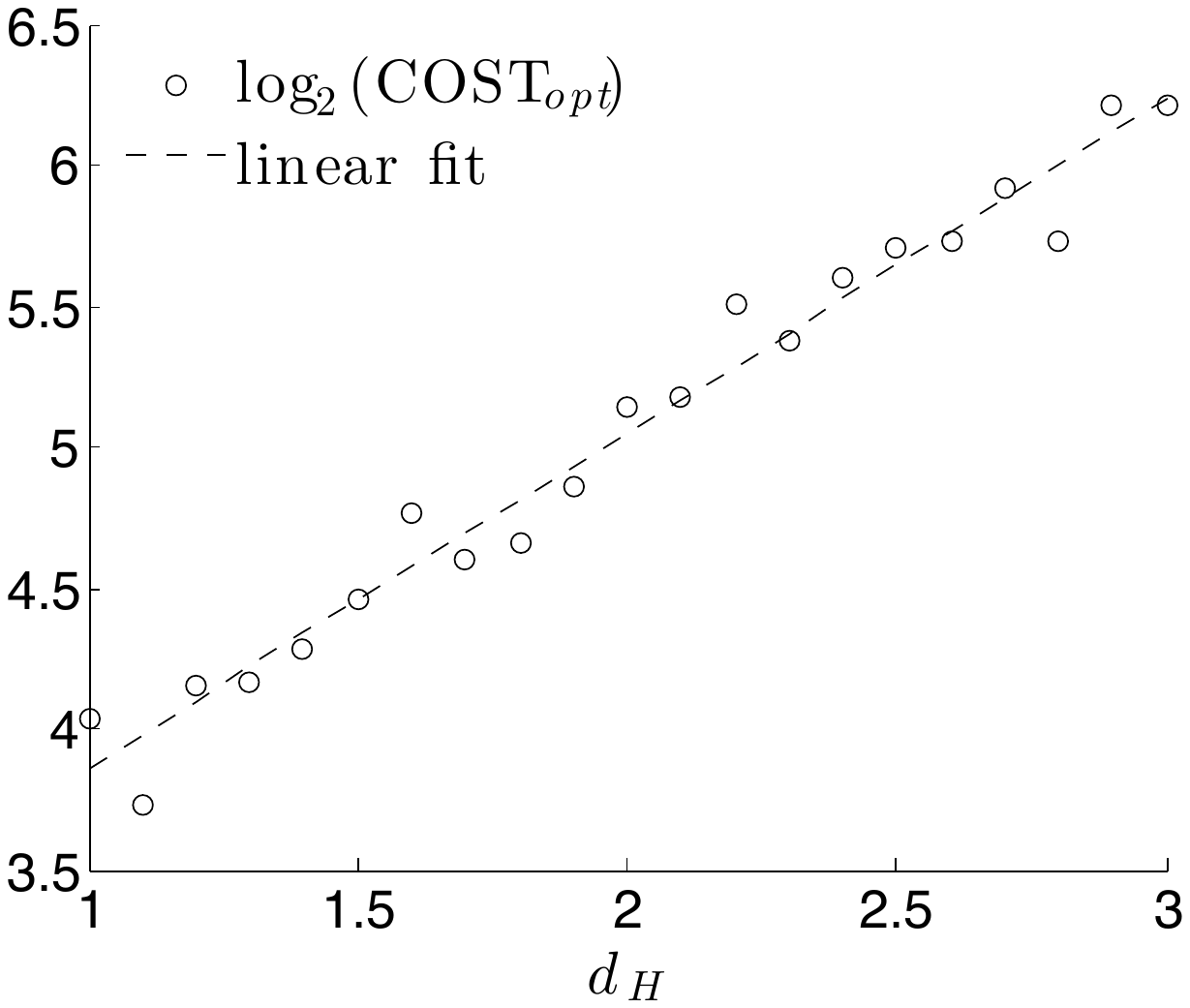}
\caption{Numerical verification for the heuristic analysis with $d_H$ varying from 1 to 3 for $N=10^6$. Left: variation of $l_{opt}$ as a function of $1/d_H$. Right: The optimum cost for the new proposed subdividing scheme. Almost linear variation of $\log (\cost_{opt})$ as a function of $d_H$ is shown.}
\label{verification}
\end{figure}


\section {Conclusion} \label{conclusion} 

In this paper, we provided a new proof of the linear complexity of the FMM in the general case. Note that our proof focuses on the cost of the FMM operators and does not consider the cost of building the FMM tree. We introduced the required modifications to the regular adaptive FMM in order to achieve ${\cal{O}}(N)$ complexity for arbitrary point distributions. The standard adaptive FMM may require arbitrarily long calculations for certain point distributions.

In order to study different adaptive FMM cases, we defined a process to properly add points and having $N \to \infty$. For this, we need to introduce the concept of super octree $T$ (an octree with an infinite number of levels) and establish a connection between a set of points and its associated super octree. This represents a significant extension of previous studies of the adaptive FMM, which are limited to a narrower range of types of distributions. We then focused on the fractal dimension of sets, e.g., the Hausdorff dimension, as a useful parameter to understand the complexity of the adaptive FMM and determine optimal parameters for the FMM.

We showed that the usual criterion to determine whether a node should be subdivided or not, based on a threshold for the number of points in leaf nodes, is insufficient. Indeed, using a single threshold $t$ throughout the tree is sub-optimal. At $\lmax$, the optimal $t$ is essentially found by finding a balance between P2P and M2L. Instead, at lower levels in the tree, the operators M2P and P2L need to be taken into account and those are typically significantly more expensive than the M2L operator. As a result, the threshold for leaves at $\lmax$ should be chosen differently from the threshold at lower depths. Consequently, we considered an optimization procedure with two parameters: the maximum number of levels $\lmax$ and a threshold $t$. The conventional optimization with a single threshold $t$ is strictly a special case of our approach.

We introduced a near-optimal solution to this new optimization problem. Using generalized Cantor sets, we showed that the new scheme has better performance compared to the conventional threshold method. The improvement would be even more evident if we had implemented an improved version of the M2L operator (for example accelerated using FFTs). Moreover, our near-optimal solution shows that using X-list and W-list is not necessarily beneficial in the implementation of the adaptive FMM. 

Finally, we established how the tree occupancy, which is the same as the Hausdorff dimension in the case of self-similar sets, affects the optimum parameters in the adaptive FMM. Based on this, we provided an optimization model to find the number of levels in the tree and the cost of the total calculation, as a function of the dimension of the point distribution. 

\bibliographystyle{plainnat}
\bibliography{manuscript}

\end{document}